\begin{document}

\newtheorem{theorem}{Theorem}
\newtheorem{lemma}[theorem]{Lemma}
\newtheorem{corollary}[theorem]{Corollary}
\newtheorem{question}[theorem]{Question}
\newtheorem{proposition}[theorem]{Proposition}
\theoremstyle{definition}
\newtheorem{definition}[theorem]{Definition}
\newtheorem{example}[theorem]{Example}

\newcommand{\R}{\mbox{${\bf R}$}}
\newcommand{\Z}{\mbox{${\bf Z}$}}
\newcommand{\Q}{\mbox{${\bf Q}$}}
\newcommand{\N}{\mbox{${\bf N}$}}
\newcommand{\elt}{\mbox{$v \in G$}}
\newcommand{\elg}{\mbox{$\phi \in \Gamma$}}
\newcommand{\gstab}{\mbox{$G_v$}}
\newcommand{\caa}{\mbox{$C_1(g)$}}
\newcommand{\cia}{\mbox{$C_i(g)$}}
\newcommand{\ima}{\mbox{${\rm Im}$}}
\newcommand{\de}{\mbox{${\rm deg}$}}
\newcommand{\dist}{\mbox{${\rm dist}$}}
\newcommand{\DL}{\mbox{${\rm\sf DL}$}}
\newcommand{\wre}{\mbox{${\rm Wr\,}$}}
\newcommand{\res}{\mbox{$\mid$}}
\newcommand{\nin}{\mbox{$\ \not\in\ $}}
\newcommand{\fix}{\mbox{${\rm fix}$}}
\newcommand{\pr}{\mbox{${\rm pr}$}}
\newcommand{\sym}{\mbox{${\rm Sym}\;$}}
\newcommand{\aut}{\mbox{${\rm Aut}$}}
\newcommand{\auttn}[0]{\mbox{${\rm Aut}\;T_n$}}
\newcommand{\autx}[0]{\mbox{${\rm Aut}\;X$}}
\newcommand{\autga}[0]{\mbox{${\rm Aut}(\Gamma)$}}
\newcommand{\autde}[0]{\mbox{${\rm Aut}(\Delta)$}}
\newcommand{\out}{\mbox{${\rm out}$}}
\newcommand{\inn}{\mbox{${\rm in}$}}
\newcommand{\Al}{\mbox{${\rm Al}$}}
\newcommand{\Pl}{\mbox{${\rm Pl}$}}

\newcommand{\vZZ}{\vec{\mathbb{Z}}}

\renewcommand{\div}{{\hbox { \rm div }}}
\renewcommand{\mod}{{\hbox { \rm mod }}}
\newcommand{\id}{\hbox{\rm id}}
\newcommand{\ZZ}{\mathbb{Z}}
\newcommand{\Aut}{\mathrm{Aut}}
\newcommand{\tG}{{\widetilde{G}}}
\newcommand{\tH}{{\widetilde{H}}}
\newcommand{\tHpsi}{{\widetilde{H}}_\psi}
\newcommand{\CDC}{\mathop{{\rm CDC}}}
\newcommand{\CDHC}{\mathop{{\rm CDHC}}}
\newcommand{\g}{\mathbf g}
\newcommand{\h}{\mathbf h}
\newcommand{\V}{\mathrm V}
\newcommand{\VX}{{\mathrm V}X}
\newcommand{\E}{\mathrm E}
\newcommand{\A}{\mathrm A}
\newcommand{\TFaut}{{\mathop{{\rm TF}}}}
\newcommand{\Sym}{\mathrm{Sym}}
\newcommand{\Haut}{{\mathop{{\rm HAut}}}}
\newcommand{\Hautde}[0]{\mbox{${\rm HAut}(\Delta)$}}

\title{Sharply $k$-arc-transitive-digraphs:\\
finite and infinite examples}

\author{
R\"ognvaldur G.\ M\"oller\thanks{Science Institute, University of Iceland, IS-107 Reykjav\'ik, Iceland.  E-mail: {\tt roggi@hi.is} \newline R\"ognvaldur G.\ M\"oller acknowledges support from the University of Iceland Research Fund.}
 ,\quad
Primo\v{z} Poto\v{c}nik\thanks{Faculty of Mathematics and Physics, University of Ljubljana, Jadranska 19, SI-1000 Ljubljana, Slovenia. E-mail: {\tt primoz.potocnik@fmf.uni-lj.si} \newline
Primo\v{z} Poto\v{c}nik acknowledges the financial support from the Slovenian Research Agency (research core funding No. P1-0294).} ,\quad
Norbert Seifter\thanks{Montanuniversit\"at~Leoben, Franz-Josef-Strasse~18, A-8700 Leoben,~Austria.
E-mail: {\tt seifter@unileoben.ac.at}}
}

\maketitle

\begin{abstract}
\noindent
A general method for constructing sharply $k$-arc-transitive digraphs, i.e.~digraphs that are $k$-arc-transitive but not $(k+1)$-arc-transitive, is presented.  Using our method it is possible to construct both finite and infinite examples.  The infinite examples can have one, two or infinitely many ends.  Among the one-ended examples there are also digraphs that have polynomial growth.   
\end{abstract}

\section*{Introduction}

For an integer $k\geq 1$ a {\em $k$-arc} of a digraph $\Gamma$
is a sequence $(v_0,\ldots ,v_k)$ of $k+1$ vertices of $\Gamma$
such that $(v_{i-1},v_i)$ is an arc
of $\Gamma$ for each $i$, $1\leq i\leq k$. We say that $\Gamma$ is $k$-arc-transitive if
$\Aut(\Gamma)$ acts transitively on the set of $k$-arcs of $\Gamma$ and {\em sharply $k$-arc-transitive} if $G$ is $k$-arc-transitive but not $(k+1)$-arc-transitive.  In an undirected graph a $k$-arc is a sequence of $k+1$ vertices such that $v_{i-1}$ and $v_i$ are adjacent for $1\leq i\leq k$ and $v_{i-2}\neq v_i$ if $2\leq i\leq k$.  The concept of  $k$-arc transitivity
was first introduced and investigated for (undirected) finite graphs, culminating in the striking result of R. Weiss \cite{Weiss1981}
which states that a finite graph with valency $\geq 3$ cannot be $8$-arc-transitive.  This result also holds for infinite graphs with polynomial growth as was shown in \cite{Seifter1991}.  On the other hand,  Hamann and Pott \cite[Corollary 1.2]{HamannPott2012} have shown that a 2-arc-transitive connected undirected graph with more than one end must be a regular tree (this was first proved for locally finite graphs by Thomassen and Woess \cite[Theorem~3.2]{ThomassenWoess1993}).

For digraphs the situation is more complicated.  There exist several constructions of finite $k$-arc-transitive digraphs
for arbitrarily large $k$ (see e.g. \cite{ConderLorimerPraeger1995,Praeger1989,MansillaSerra2001}). If the digraphs in consideration 
are infinite, then they might even be highly-arc-transitive, which means that the automorphism group is $k$-arc-transitive for all $k\geq 0$, where $0$-arc-transitivity means  that the digraph is vertex-transitive. The concept of highly-arc-transitive digraphs was introduced in \cite{CPW1993}.   
For infinite digraphs with more than one end it seems 
at a first glance that $k$-arc-transitivity, for sufficiently large $k$, might always imply that the digraph is also highly-arc-transitive. As was shown in \cite{Seifter2008} this really holds for certain digraphs with more than one end and prime in- and out-valency and further results along these lines are proved in \cite{MollerPotocnikSeifter2018}.  But in general this does not hold for  digraphs with more than one end, as was shown by Mansilla \cite{Mansilla2007}
who constructed two-ended 2-arc-transitive digraphs that are not 3-arc-transitive.  

In this paper we present a general method for constructing sharply $k$-arc-transitive digraphs.  The basic construction takes as an input a digraph $\Delta$ that satisfies mild 
symmetry conditions and an integer $k$ and produces a sharply $k$-arc-transitive digraph.  When $\Delta$ is finite the construction produces a two-ended digraph and by varying $\Delta$ one gets such digraphs with any possible valency.  By choosing $\Delta$ to be an infinite	 directed line with a loop attached at every vertex we get a sharply $k$-arc-transitive one-ended digraph with polynomial growth.  Variations of the construction can be used to produce finite sharply $k$-arc-transitive digraphs as well as infinite sharply $k$-arc-transitive digraphs with exponential growth and one or infinitely many ends.

\medskip

\noindent
{\bf Remark.}  When we were nearing the completion of the project described in this paper we realized that our work is very closely related to the work done by Cheryl Praeger in \cite{Praeger1989}.  Praeger only considers finite digraphs but one can show that the basic idea in her construction \cite[Definition~2.10]{Praeger1989} can be modified to yield infinite digraphs that are isomorphic to the digraphs we construct in Section~\ref{SConstruction}.  Praeger's set up is not as general as ours and while she proves that her digraphs $C_r(v, s,\Delta)$ are $s$-arc-transitive she does not consider the question if these graphs are sharply $s$-arc-transitive.  In her paper she does construct a family $C_r(v,s)$ of finite digraphs that are sharply $s$-arc-transitive but the reason why these graphs are not $(s+1)$-arc-transitive is entirely different from the reason why our graphs are not $(s+1)$-arc-transitive. 
 The connections between our work and Praeger's work in \cite{Praeger1989} are described in more details in remarks in Sections~\ref{SConstruction}, \ref{SAutomorphism} and \ref{SFinite}.

\section*{Notation and terminology}

A {\em digraph} $\Gamma$ is a pair of sets $(\V\Gamma, \E\Gamma)$ where $\V\Gamma$ is a set whose elements we call {\em vertices} and $\E\Gamma$ is a subset of $\V\Gamma\times \V\Gamma$ whose elements we call {\em arcs}.
If $(u,v)$ is an arc of $\Gamma$, then $u$ is called the {\em initial vertex} and $v$ the {\em terminal vertex} of the arc $(u,v)$.
 In some examples we will use digraphs with loops, i.e.~arcs of the type $(v,v)$,  and we also allow, if $v\neq w$, that both $(v,w)$ and $(w,v)$ are arcs.  An {\em undirected graph}, or just {\em graph}, is similarly a pair $(\V\Gamma, \E\Gamma)$ but now $\E\Gamma$ is a set of 1 and 2 element subsets of $\V\Gamma$.  The elements of $\E\Gamma$ are called {\em edges}. For a digraph $\Gamma$ we define the {\em underlying undirected graph} as the undirected graph that has the same vertex set as $\Gamma$ and the edge set consisting of all sets $\{u, v\}$ where $u$ and $v$ are vertices in $\Gamma$ and $(u,v)$ or $(v,u)$ is an arc in $\Gamma$.

For a vertex $v$ in a digraph $\Gamma$
we define the sets of {\em in-} and {\em out-neighbours} as
$$\inn(v)=\{u\in \V\Gamma\mid (u,v)\in \E\Gamma\}\qquad\mbox{and}\qquad
\out(v)=\{u\in \V\Gamma\mid (v,u)\in \E\Gamma\}.$$
A vertex that is an in- or out-neighbour of $v$ is said to be a {\em neighbour}.
The cardinality of $\inn(v)$ is the {\em in-valency} of $v$ and the cardinality of $\out(v)$ is the {\em out-valency} of $v$.

A {\em path} $W$ in a digraph $\Gamma$ is a sequence $v_0, a_1, v_1, a_2, \ldots, a_n, v_n$ where the $v_i$'s are pairwise distinct vertices and the $a_i$'s are arcs such that $a_i=(v_{i-1}, v_{i})$ or $a_i=(v_i, v_{i-1})$  for $i=0, 1, \ldots, n$.  One-way infinite paths $v_0, a_1, v_1, a_2, v_3, \ldots$ are often called {\em rays}.   When the digraph is without loops and asymmetric, i.e.~there is no pair of vertices $u, v$ such that both $(u,v)$ and $(v,u)$ are arcs, then one can leave out the arcs when discussing  paths and just list the vertices.   A digraph is said to be {\em connected} if for any pair $v, w$ of vertices there is a path with initial vertex $v$ and terminal vertex $w$.  

A digraph is said to have two ends if it is connected and by removing some finite set of vertices it is possible to get two infinite components, but one never gets more than two infinite components.   Formally the ends of a digraph (or a graph) are defined as equivalence classes of rays where two rays are said to be equivalent, or belong to the same end, if there exists an infinite family of pairwise disjoint paths all with their initial vertex in one of the rays and the terminal vertex in the other.  This equivalence relation is clearly invariant under the action of the automorphism group and thus the automorphism group acts on the equivalence classes, i.e.~the ends.   Note that by \cite[Corollary~4]{DiestelJungMoller1993} a vertex transitive graph has either 0, 1, 2 or at least $2^{\aleph_0}$ ends.
  
An {\em automorphism} of a digraph $\Gamma$ is a bijective map $\varphi: \V\Gamma\to \V\Gamma$ such that $(u,v)$ is an arc in $\Gamma$ if and only if $(\varphi(u), \varphi(v))$ is an arc in $\Gamma$.  The group of all automorphisms of $\Gamma$ is denoted by $\autga$.  A {\em digraph morphism} from a digraph $\Gamma_1$ to a digraph $\Gamma_2$ is a map $\varphi: \V\Gamma_1\to \V\Gamma_2$ such that if $(u,v)$ is an arc in $\Gamma_1$ then $(\varphi(u), \varphi(v))$ is an arc in $\Gamma_2$.
A {\em $k$-arc} in a digraph
$\Gamma$ is a sequence of vertices $v_0, v_1, \ldots, v_k$ such that $(v_i, v_{i+1})$ is an arc for $i=0, 1, \ldots, k-1$.
The vertex $v_0$ is the {\em initial vertex} of the $k$-arc and $v_k$ the {\em terminal vertex}.
The group of automorphisms acts on the set of $k$-arcs and if this action is transitive then we say that $\Gamma$ is {\em $k$-arc-transitive}.  (Note that 0-arc-transitive means just that the digraph is vertex-transitive.)   A digraph that is $k$-arc-transitive but not $(k+1)$-arc-transitive is said to be {\em sharply $k$-arc-transitive}.  If a digraph is $k$-arc-transitive for all $k$ then we say that it is {\em highly-arc-transitive}.

Two arcs (not necessarily distinct) are said to be {\em related} if they have a common initial vertex  or a common terminal vertex.  Let $R$ be the transitive closure of this relation.  The relation $R$ is clearly an
$\autga$-invariant equivalence relation.  
 A  subdigraph spanned by one of the equivalence classes is called an {\em alternet}.  If the automorphism group is transitive on arcs then all the alternets are isomorphic.  This relation is defined in the Introduction of \cite{CPW1993} where it is called the {\em reachability relation} and the alternets are called {\em reachability digraphs}.

One can regard the integers $\ZZ$ as an undirected graph in an obvious way, but it is also possible to regard them as a digraph $\vZZ$ with
arc-set $\{(i,i+1) \mid i \in \ZZ\}$.  A digraph $\Gamma$ is said to have {\em Property Z} if there exists a surjective digraph morphism $\Gamma\to \vZZ$.

Consider now a group $G$ acting on a set $\Omega$.
Denote the image of a point $x\in \Omega$ under an element $g\in G$ by $x^g$.  The {\em stabilizer of a point $a$ in} $G$ is the subgroup $G_a=\{g\in G\mid a^g=a\}$.  For a set $A\subseteq \Omega$ we denote the {\em pointwise stabilizer} of $A$ by
$G_{(A)}=\{g\in G\mid a^g=a\mbox{ for all }a\in A\}$ and the {\em setwise stabilizer} by $G_{\{A\}}=\{g\in G\mid A^g=A\}$.   

For a group $G$ with generating set $S$ we define the {\em Cayley digraph} of $G$ with respect to the generating set $S$ as the digraph with vertex set $G$ and $(g,h)$ is an arc if $h=sg$ for some element $s\in S$.  If a group  $G$ acts regularly (i.e.~transitively and no group element, except the identity, fixes any vertex) on a connected digraph $\Delta$ then $\Delta$ is a Cayley digraph for $G$ with respect to some generating set $S$.

\section{General construction}\label{SConstruction}

Let $\Delta$ be a digraph (possibly with loops) with vertex set $V=\V\Delta$ and arc set $\E\Delta$.
 For a positive integer $k$ define
$\ZZ(\Delta,k)$ as the digraph with vertex set $\ZZ \times V^k$ in which the out-neighbourhood of a vertex
$(i;v_0, \ldots, v_{k-1})$ is the set of vertices $(i+1;w_0, \ldots, w_{k-1})$ satisfying:
$$
 v_j = w_j \> \hbox { if } \> j \not \equiv i \mod k, \qquad\mbox{and}\qquad
 (v_j, w_j)\in \E\Delta \> \hbox { if } \>  j \equiv i \mod k.
$$
Set $\Gamma=\ZZ(\Delta, k)$.  The map $\varphi:\V\Gamma\to\vZZ$ such that $\varphi(i;v_0, \ldots, v_{k-1})=i$ is a digraph morphism and thus $\Gamma$ has Property Z. For $i\in \ZZ$, let $\Gamma_i = \{i\} \times V^k=\varphi^{-1}(i)$.
%We shall call the sets $\Gamma_i$ the {\em fibres} of $\ZZ(\Delta,k)$

The {\em canonical double cover} of a digraph $\Delta$, denoted with $\CDC(\Delta)$, is the digraph with vertex-set $V\times \ZZ_2$ (for a non-negative integer $n$ we let $\ZZ_n$ denote the ring of integers modulo $n$) such that $(x,y)$ is an arc in $\Delta$ if and only if
%
%\corr{$((x,0),(y,1))$  where $(x,y)$ is an arc in $\Delta$}{$((x,j),(y,j+1))$  where $(x,y)$ is an arc in $\Delta$ and $j\in \ZZ_2$}
%
$((x,0),(y,1))$  and $((x,1), (y,0))$ are arcs in $\CDC(\Delta)$.  In the present work we have use for the subdigraph of $\CDC(\Delta)$ that has the same vertex set as $\CDC(\Delta)$ and the arc set is the set of all arcs in $\CDC(\Delta)$ of the type $((x,0),(y,1))$.  This subdigraph will be called the {\em canonical double half-cover} of $\Delta$ and denoted $\CDHC(\Delta)$.  If $\Delta$ has the property that whenever $(u,v)$ is an arc in $\Delta$ then $(v,u)$ is also an arc in $\Delta$ then the underlying undirected graph of $\CDHC(\Delta)$ is the canonical double cover of the underlying undirected graph of $\Delta$.
It is easy to see that in this case $\CDC(\Delta)$ (and thus also $\CDHC(\Delta)$) is connected if and only if $\Delta$ is connected and 
not bipartite.  

Let $i$ and $i'$ be integers such that $0\leq i'\leq k-1$ and $i'\equiv i \mod k$.  For a given choice of vertices $v_0, \ldots, v_{i'-1}, v_{i'+1}, \ldots, v_k$ of $\Delta$ the subdigraph of $\Gamma$ spanned by the set of vertices
%$$\sout{\{(i;v_0, \ldots, v_{i'-1}, u, v_{i'+1}, \ldots, v_{k-1}) , (i+1;v_0, \ldots, v_{i'-1}, u, v_{i'+1}, \ldots, v_{k-1})\mid u\in V\}}$$
$$\{(i + \epsilon ;v_0, \ldots, v_{i'-1}, u, v_{i'+1}, \ldots, v_{k-1})  \mid u\in V, \epsilon \in \{0,1\}\}$$
is isomorphic
to the canonical double half-cover $\CDHC(\Delta)$.
When $\CDHC(\Delta)$ is connected these are just the
alternets of $\ZZ(\Delta,k)$ and then one sees that $\ZZ(\Delta, k)$ is connected.

\medskip

\noindent
{\bf Remark.}
As mentioned in the introduction,
Praeger constructs in \cite[Definition 2.10]{Praeger1989} a family $C_r(v,s,\Delta)$ of finite graphs, where $r, v, s$ are positive integers and $r$ is a multiple of $s$ and $\Delta$ is an undirected graph
with vertex-set $\ZZ_v$.  
The graph $\ZZ(\Delta,k)$ we defined above can be viewed as a generalization of her construction to the infinite case.
Praeger then shows in \cite[Proposition 2.11]{Praeger1989} that if $\Delta$ is 1-arc-transitive then the graph $C_r(v,s,\Delta)$ is $s$-arc-transitive.  In the case were $\Delta=K_v$ is a complete graph she also proves that the graph $C_r(v, r-s, \Delta)$ is sharply $s$-arc transitive.

In this paper, we not only generalize this to the infinite case by proving analogous results for $\ZZ(\Delta,k)$  but also relax the condition on $1$-arc-transitivity of $\Delta$ (see Lemma~\ref{LArc-transitive}), which enables us to construct a wide variety of sharply $k$-arc-transitive digraphs. Some of these examples have one end, others have two or infinitely many ends and some have polynomial growth of degree $k+1$ (see Corollary~\ref{CPolynomial}). Below we explain the relationship between the graphs $C_r(v,s,\Delta)$
and $\ZZ(\Delta,k)$ in more details.

Let $\Delta$ be an undirected graph 
with vertex set $\ZZ_v$ for some integer $v\ge 2$.  Let $k\ge 3$ be an integer and  $m$ an multiple of $k$.
Praeger defines the digraph $C_m(v,k,\Delta)$ with   vertex set 
$\ZZ_m\times (\V\Delta)^k = \ZZ_m\times (\ZZ_v)^k$ and with 
$((i;x_1, \ldots, x_{k-1}, x_k), (j;y_1, \ldots, y_{k-1}, y_k))$ being an arc of $C_m(v,k,\Delta)$ if and only if
$j=i+1$ (in $\ZZ_m$), $y_1$ is a neighbour of $x_k$ in $\Delta$ and $x_l=y_{l+1}$ for $l=1, \ldots, k-1$.
This construction can be generalized to yield a digraph
$P(\Delta, k)$ with  vertex set $\ZZ\times (\V\Delta)^k$ and $((i;x_1, \ldots, x_{k-1}, x_k), (j;y_1, \ldots, y_{k-1}, y_k))$ being an arc of $P(\Delta, k)$ if and only if $j=i+1$ (in $\ZZ$), $y_1$ is a neighbour of $x_k$ in $\Delta$ and $x_l=y_{l+1}$ for $l=1, \ldots, k-1$. We now show that $P(\Delta,k)$ is isomorphic to $\ZZ(\Delta,k)$.  (When we construct $\ZZ(\Delta,k)$ we think of each edge $\{v, w\}$ in $\Delta$ as being represented by two arcs $(v,w)$ and $(w,v)$.)

For an integer $i$ we let $i'$ be the integer such that 
$0\leq i'\leq k-1$ and $i'\equiv i \mod k$.
We define a map $\theta$ from $\V\ZZ(\Delta, k)$ to $\V P(\Delta, k)=\V\ZZ(\Delta, k)$ in such a way that if $i'=0$ then 
$$\theta(i;x_0, \ldots, x_{k-2}, x_{k-1})=  
(i;x_{k-1}, x_{k-2},\ldots,  x_{0})$$
and if $i'\geq 1$ then
$$
\theta(i;x_0, \ldots, x_{i'-1}, x_{i'}, x_{i'+1}, \ldots, x_{k-1})=(i;x_{i'-1}, \ldots,x_0, x_{k-1},\ldots, x_{i'+1}, x_{i'}).
$$  
The arcs in $\ZZ(\Delta, k)$ are of the type
 $$((i;x_1, \ldots, x_{i'-1}, x_{i'}, x_{i'+1}, \ldots, x_k), (i+1;x_1, \ldots, x_{i'-1}, y_{i'}, x_{i'+1},  \ldots, x_k))$$
  with $(x_{i'},y_{i'})$ being an arc of $\Delta$.  The image of this arc under $\theta$ is
$$((i;x_{i'-1}, \ldots, x_0, x_{k-1},\ldots, x_{i'}), (i+1;y_{i'}, \ldots, x_0, x_{k-1},\ldots, x_{i'+1}))$$
which is an arc in $P(\Delta, k)$. 
  The map $\theta$ is clearly bijective and $\theta^2$ is the identity map.  Hence $\theta$ is an isomorphism of the digraphs $\ZZ(\Delta, k)$ and $P(\Delta, k)$.  

%Moreover, it is easy to see that the sets $\Gamma_i$ are precisely the equivalence classes of the
%reachability relation $R_\infty^+$ and are also equal to the equivalence classes of $R_\infty^-$, and that the alter-exponents $\exp^-(\Delta)$
%and $\exp^+(\Delta)$ are both equal to $k$.

\medskip

\section{The automorphism group}\label{SAutomorphism}

We now discuss the automorphisms of $\ZZ(\Delta,k)$. First, it is a matter of straightforward calculation to
check that  the permutation $s$ of
$\ZZ \times V^k$ defined by
$$
(i;x_0, \ldots, x_{k-2}, x_{k-1})^s = (i+1;x_{k-1}, x_0, \ldots, x_{k-2})
$$
is an automorphism of $\ZZ(\Delta,k)$.

For an automorphism $g\in\autde$ and $j\in \ZZ_k$,
let $[g]_j$ be the permutation of $\ZZ \times V^k$ defined by
$$(i;x_0,  \ldots, x_{j-1}, x_j, x_{j+1}, \ldots, x_{k-1})^{[g]_j} 
= (i;x_0, \ldots, x_{j-1}, x_j^{g}, x_{j+1}, \ldots, x_{k-1}).$$
Observe that $[g]_j$ is an automorphism of $\ZZ(\Delta,k)$.   A simple calculation yields that $s^{-1}[g]_js=[g]_{j+1}$. 
The group generated by $s$ and the set $\{[g]_j\mid g\in \autde,\ j\in \ZZ_k\}$
is isomorphic to the semidirect product $(\autde)^k \rtimes \ZZ$ where the automorphism $s$ generates $\ZZ$
and cyclically permutes the components of $(\autde)^k$.   

\begin{proposition}\label{PCayley}
Suppose $\Delta$ is a Cayley digraph for a group $H$.  If the digraph $\ZZ(\Delta,k)$  is connected then it is a Cayley digraph for the group $H^k \rtimes \ZZ$.  
\end{proposition}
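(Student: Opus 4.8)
The plan is to exhibit a group isomorphic to $H^k\rtimes\ZZ$ that acts regularly on $\ZZ(\Delta,k)$; by the last observation in the section on Cayley digraphs, regular action on a connected digraph forces the digraph to be a Cayley digraph for that group. The natural candidate is the subgroup of $\Aut(\ZZ(\Delta,k))$ generated by the shift automorphism $s$ and the automorphisms $[h]_j$ for $h\in H$ (viewing $H\le\Aut(\Delta)$ via its regular action on $\V\Delta$) and $j\in\ZZ_k$. First I would record that this group is isomorphic to $H^k\rtimes\ZZ$: the relation $s^{-1}[g]_js=[g]_{j+1}$ is already noted in the text, and since $H$ acts \emph{regularly} (hence faithfully) on $\V\Delta$, the subgroup $\{[h]_j\mid h\in H\}$ is isomorphic to $H$ for each fixed $j$, these commute for distinct $j$, and together with $s$ generating a copy of $\ZZ$ acting by cyclic permutation one gets exactly $H^k\rtimes\ZZ$. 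This part is essentially the structural remark preceding the proposition, specialized from $\autde$ to its regular subgroup $H$.

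Next I would check that this group, call it $G$, acts regularly on $\V\ZZ(\Delta,k)=\ZZ\times V^k$. For transitivity: given vertices $(i;x_0,\dots,x_{k-1})$ and $(i';y_0,\dots,y_{k-1})$, first apply $s^{\,i'-i}$ to move the first coordinate to $i'$ (noting that $s$ shifts $\ZZ$ by $1$ and also cyclically rotates the $V^k$ coordinates, so one must keep track of that rotation). After that, the first coordinates agree, and since $H$ is transitive on $V$ one can, for each $j\in\{0,\dots,k-1\}$, choose $h_j\in H$ with $(\text{current }j\text{-th coordinate})^{h_j}$ equal to the desired one, and apply the product $\prod_j[h_j]_j$, which acts coordinatewise and independently. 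Hence $G$ is transitive. For freeness: suppose $w\in G$ fixes a vertex. Write $w=s^n\cdot\prod_{j}[h_j]_j$ using the semidirect-product normal form. Looking at the first ($\ZZ$-)coordinate of the fixed vertex forces $n=0$; then $w=\prod_j[h_j]_j$ fixes $(i;x_0,\dots,x_{k-1})$ means $x_j^{h_j}=x_j$ for every $j$, and regularity of $H$ on $V$ gives $h_j=1$ for all $j$. So $w$ is the identity and the action is free.

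With regular action established and the hypothesis that $\ZZ(\Delta,k)$ is connected, the cited fact (a group acting regularly on a connected digraph realizes it as a Cayley digraph for that group) completes the proof.

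I expect the only genuine bookkeeping hurdle to be the interaction between $s$ and the coordinate rotation: because $s$ simultaneously increments the $\ZZ$-coordinate and cyclically permutes the $V^k$-block, the order in which one applies $s^{i'-i}$ and the $[h_j]_j$'s matters, and one should either fix a convention (shift first, then correct coordinates) or use the relation $s^{-1}[g]_js=[g]_{j+1}$ to commute things past each other. This is routine once the normal form $s^n\prod_j[h_j]_j$ for elements of $G$ is in hand, and that normal form is exactly what the preceding paragraph about $(\autde)^k\rtimes\ZZ$ provides (restricted to $H$).
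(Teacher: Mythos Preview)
Your proposal is correct and follows exactly the same approach as the paper: exhibit the subgroup of $\Aut(\ZZ(\Delta,k))$ generated by $s$ and the $[h]_j$ with $h\in H$, identify it with $H^k\rtimes\ZZ$, and observe that it acts regularly. The paper's own proof is a single sentence asserting that $H^k\rtimes\ZZ$ acts regularly on $\ZZ(\Delta,k)$; you have simply (and correctly) unpacked the transitivity and freeness checks that the paper leaves implicit.
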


\begin{proof}
Clearly $H^k \rtimes \ZZ$ acts regularly on $\ZZ(\Delta,k)$ and thus $\ZZ(\Delta,k)$ is a Cayley digraph for $G$.
\end{proof}

In some cases there is a more general way to construct automorphisms. For that we need the notion of a {\rm two-fold automorphism}
described below.

%\section{Two-fold automorphisms}

Let $\Delta$ be a digraph and $V$ the vertex-set of $\Delta$.
Following \cite{LauriMizziScapellato2011} and \cite{LauriMizziScapellato2015}, a pair  $(g,h)$ of permutations
of $V$ is called a {\it two-fold automorphism} (or a TF-automorphism, for short)
of $\Delta$ provided that for every
two vertices $u,v\in V$ we have that $(u,v)\in \E\Delta$ if and only if $(u^g, v^h)\in \E\Delta$.
TF-automorphisms were first studied in \cite{Zelinka1971}, where they were called {\it autotopies}.

TF-automorphisms arise naturally in the study of the automorphism group of the canonical double half-cover
of a digraph. Namely, if $\Delta$ is a digraph and $\CDHC(\Delta)$ is its canonical double half-cover, then
a TF-automorphism $(g,h)$ of $\Delta$ gives rise to an automorphism of $\CDHC(\Delta)$ that maps
the vertex $(x,0)$ to $(x^g,0)$ and the vertex $(x,1)$ to $(x^h,1)$ for every vertex $x$ of $\Delta$. Conversely,
every automorphism of $\CDHC(\Delta)$ arises in this way from a TF-automorphism of $\Delta$.

The set of all TF-automorphisms of $\Delta$ will be denoted by $\TFaut(\Delta)$ and it
forms a subgroup of $\Sym(V)\times\Sym(V)$. Let
$$
\TFaut_1(\Delta) =  \{ g \in \Sym(V) \mid \exists h\in \Sym(V) \hbox { such that } (g,h)\in \TFaut(\Delta)\}
$$
and
$$
\TFaut_2(\Delta)  =  \{ h \in \Sym(V) \mid \exists g\in \Sym(V) \hbox { such that } (g,h)\in \TFaut(\Delta)\}.
$$
Note that both $\TFaut_1(\Delta)$ and $\TFaut_2(\Delta)$ are subgroups of $\Sym(V)$. Let $\Haut(\Delta)$
denote their intersection.  Clearly, $\autde\leq \Haut(\Delta)$.

Below, \lq\lq$\!\div\!$\rq\rq\ denotes integer division so that if $n$ and $k$ are positive integers then \lq\lq$n\div k$\lq\lq\ denotes the largest integer $l$ such that $kl\leq n$.

%We will now define a special type of subgroups of $\Haut(\Delta)$ that will be used later to produce
%automorphisms of $\ZZ(\Delta,k)$.
\begin{lemma}\label{LAction}
Let $\g=\{g_t\}_{t\in\ZZ}$ be a family of permutations of the vertex set of a digraph $\Delta$ such that for each $t\in \ZZ$ the pair $(g_t, g_{t+1})$ is a two-fold automorphism of $\Delta$.   Then the mapping $[\g]_{j}$, for $0\leq j\leq k-1$, defined by
$$(i;x_0, \ldots, x_j, \ldots, x_{k-1})^{[\g]_{j}} = (i;x_0, \ldots,  x_j^{g_t},  \ldots, x_{k-1}),$$ 
where $t = (i-j+k-1) \div k$,
is an automorphism of  $\ZZ(\Delta,k)$.  
\end{lemma}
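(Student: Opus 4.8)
The plan is to verify that $[\g]_j$ is a level-preserving bijection of the vertex set of $\Gamma:=\ZZ(\Delta,k)$ which preserves the arc relation in both directions; here $j\in\{0,\ldots,k-1\}$ is fixed, and I write $m$ for a running coordinate index so as not to clash with the running index in the definition of $\ZZ(\Delta,k)$. First, $[\g]_j$ fixes the $\ZZ$-coordinate, and on each fibre $\Gamma_i$ it is exactly the bijection of $V^k$ given by applying the permutation $g_{t(i)}$, where $t(i)=(i-j+k-1)\div k$ (reading $\div$ as floor division), to the coordinate in position $j$; since every $g_t$ lies in $\Sym(V)$, this makes $[\g]_j$ a bijection of $\V\Gamma$. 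Because every arc of $\Gamma$ runs from $\Gamma_i$ to $\Gamma_{i+1}$ for some $i$, and $[\g]_j$ maps each $\Gamma_i$ onto itself, it is enough to show, for every $i\in\ZZ$ and every pair $\mathbf u=(i;x_0,\ldots,x_{k-1})$, $\mathbf w=(i+1;y_0,\ldots,y_{k-1})$, that $(\mathbf u,\mathbf w)$ is an arc of $\Gamma$ if and only if $(\mathbf u^{[\g]_j},\mathbf w^{[\g]_j})$ is.

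The crux is to track the two exponents $a:=t(i)$ and $b:=t(i+1)$ of the permutations applied to coordinate $j$ at the source and the target of such a pair. Here $b-a=\lfloor(i-j+k)/k\rfloor-\lfloor(i-j+k-1)/k\rfloor$, and for any integer $n$ one has $\lfloor(n+1)/k\rfloor-\lfloor n/k\rfloor=1$ if $k\mid n+1$ and $=0$ otherwise (write $n=qk+r$ with $0\le r<k$). Taking $n=i-j+k-1$, so that $n+1\equiv i-j\pmod k$, gives $b=a$ when $i\not\equiv j\pmod k$ and $b=a+1$ when $i\equiv j\pmod k$. Writing $i'$ for the residue of $i$ modulo $k$ lying in $\{0,\ldots,k-1\}$, as in Section~\ref{SConstruction}, and using $0\le j\le k-1$, the second case is precisely $j=i'$, that is, precisely the case in which the arc from level $i$ to level $i+1$ acts on coordinate $j$.

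It remains to insert this into the definition of $\Gamma$. If $j\neq i'$, then $(\mathbf u,\mathbf w)$ is an arc iff $y_m=x_m$ for all $m\neq i'$ and $(x_{i'},y_{i'})\in\E\Delta$; after applying $[\g]_j$ the coordinate $i'$ is untouched at both ends, while the requirement $x_j=y_j$ becomes $x_j^{g_a}=y_j^{g_b}$, which since $b=a$ and $g_a$ is a permutation is again equivalent to $x_j=y_j$, so the whole list of conditions is unchanged. If $j=i'$, then $(\mathbf u,\mathbf w)$ is an arc iff $y_m=x_m$ for all $m\neq j$ and $(x_j,y_j)\in\E\Delta$; after applying $[\g]_j$ the equalities for $m\neq j$ persist and the last condition becomes $(x_j^{g_a},y_j^{g_{a+1}})\in\E\Delta$, which by the defining property of the TF-automorphism $(g_a,g_{a+1})=(g_{t(i)},g_{t(i)+1})$ of $\Delta$ holds if and only if $(x_j,y_j)\in\E\Delta$. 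In both cases $(\mathbf u,\mathbf w)$ is an arc exactly when its image is, whence $[\g]_j\in\Aut(\Gamma)$.

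I expect the only genuine obstacle to be the exponent computation of the second paragraph: one must confirm that $t(i)$ is locally constant in $i$ and increases by exactly $1$ precisely across those arcs that modify coordinate $j$ — this is what makes the two-fold (rather than ordinary) automorphism hypothesis suffice. The remainder is a routine unwinding of the definition of $\ZZ(\Delta,k)$, and since the TF-automorphism condition is itself stated as a biconditional, no separate argument for the reverse implication is needed.
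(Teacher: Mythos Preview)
Your proof is correct and follows essentially the same route as the paper's: split into the two cases $j\not\equiv i\pmod k$ and $j\equiv i\pmod k$, verify that the exponent $t(i)$ jumps by exactly one precisely in the second case, and then invoke the TF-automorphism property of $(g_t,g_{t+1})$ there. Your version is in fact slightly more complete than the paper's, which only checks that arcs map to arcs and leaves bijectivity and the reverse implication implicit; you spell both out.
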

%\roggicomment{This is more general than in the previous drafts and I added a \lq\lq$+k$\rq\rq\ to the formula for $t$ so that $[g]_{j,\varphi}$  acts like $g$ when $i=0$.}

\begin{proof}
Suppose that
$$
 a=\bigl((i;x_0, \ldots, x_{k-1}),\, (i+1;y_0, \ldots, y_{k-1})\bigr)
 $$
  is an arc of $\ZZ(\Delta,k)$.
Set $t=(i-j+k-1) \div k$. Let $i'$ be an integer such that $i'\equiv i \mod k$ and $0\leq i'\leq k-1$.
Then $x_l=y_l$ if $l\neq i'$ and $(x_{i'}, y_{i'})$ is an arc in $\Delta$.

Suppose first that $j \not\equiv i \mod k$. Then 
$$(i-j+k-1) \div k = ((i+1)-j+k-1) \div k,$$ 
and hence
$$
a^{[\g]_{j}}= \bigl((i;x_0, \ldots,  x_j^{g_t},  \ldots, x_{k-1}),\,
(i +1;y_0, \ldots,  y_j^{g_t},  \ldots, y_{k-1})\bigr).
$$
Since $x_j=y_j$ it is clear that this is also an arc in $\ZZ(\Delta,k)$.

Suppose now that $j\equiv i \mod k$. Then $ ((i+1)-j+k-1) \div k = t + 1$, and hence
$$
a^{[\g]_{j}}=  \bigl((i;x_0, \ldots,  x_j^{g_t},  \ldots, x_{k-1}),\,
(i +1;y_0, \ldots,  y_j^{g_{{t+1}}},  \ldots, y_{k-1})\bigr).
$$
Since $a$ is an arc of $\ZZ(\Delta,k)$, we have  that $(x_j, y_j)=(x_{i'}, y_{i'})\in \E\Delta$.
And, because $(g_t, g_{t+1})$ is a TF-automorphism then $(x_j^{g_t}, y_j^{g_{t+1}})$ is also in $\E\Delta$.  Thus, the image of $a$ under $[\g]_{j}$
 is also an arc of $\ZZ(\Delta,k)$.
 \end{proof}

\begin{definition}
\label{def:TF12}
Suppose that  $H$ is a non-trivial subgroup of $\Haut(\Delta)$.  If $\psi$ is an automorphism of $H$
%$\varphi \colon \TFaut_1(\Delta) \to \TFaut_2(\Delta)$
such that $(g,g^\psi) \in \TFaut(\Delta)$ for every $g\in H$ then
we say that $H$ is a {\it $\psi$-stable subgroup} of $\Haut(\Delta)$.

A $\psi$-stable subgroup $H$ is said to be {\it $\psi$-arc-transitive} if
for every two arcs
$(x,y)$ and $(z,w)$ of $\Delta$ there exists $g\in H$ such that $(x^g,y^{g^\psi}) = (z,w)$.
\end{definition}

Note that if $H$ is a $\psi$-stable subgroup of $\Hautde$ and $g\in H$, then $g^{\psi^t} \in H$
for every $t\in \ZZ$ and $(g^{\psi^t},g^{\psi^{t+1}}) \in \TFaut(\Delta)$ for every $t\in \ZZ$.

We now give two basic examples of digraphs admitting a $\psi$-arc-transitive $\psi$-stable subgroup.

\begin{example}\label{Eid-transitive}
Every subgroup $H\le \autde$ is an $\id_H$-stable subgroup of $\Hautde$.  If, in addition, $H$ acts
transitively on the arcs of $\Delta$, then it is $\id_H$-arc-transitive in the sense of Definition~\ref{def:TF12}.
\end{example}

\begin{example}\label{Ecycles}
\label{ex:cic}
Let $\Theta_n$ be a directed cycle of length $n\geq 3$ with a loop attached at every vertex.  Furthermore, $\Theta_\infty$ denotes an infinite directed line with a loop attached at every vertex.  Identify the vertex set of $\Theta_n$ with $\ZZ_n$ and the vertex set of $\Theta_\infty$ with $\ZZ$ in the obvious way.
Let $a$ and $b$ be the permutations of $\V\Theta_n$   with $3\leq n\leq \infty$ defined by $x^a = x+1$ and $x^b = - x$, respectively.
Set $H=\langle a, b\rangle$ and observe that $H$ is the dihedral group consisting of all
elements of the form $b^\epsilon a^i$ for $\epsilon \in \{0,1\}$ and $i\in\ZZ$.
A straightforward computation shows that  for every $i\in \ZZ$,
the pairs $(a^i, a^i)$ and $(b a^i, b a^{i+1})$ are elements of $\TFaut(\Theta_n)$,
implying that $H\le \Haut(\Theta_n)$. Moreover, by letting $\psi$  be the automorphism of $H$ that acts
as the identity on $\langle a \rangle$ and maps $ba^i$ to $ba^{i+1}$, we see that
$H$ is $\psi$-stable. 
Finally, $H$ is $\psi$-arc-transitive in the sense of Definition~\ref{def:TF12}.
For future reference, note that when dealing with $\Theta_n$ with $n<\infty$ the order of $\psi$ is $n$.
\end{example}

\begin{corollary}\label{CAction}
Let $H$ be a $\psi$-stable subgroup of $\Haut(\Delta)$. Then
for every $j\in \ZZ_k$ and $g\in H$, the mapping $[g]_{j, \psi}$ defined by
$$(i;x_0, \ldots, x_j, \ldots, x_{k-1})^{[g]_{j, \psi}} = (i;x_0, \ldots,  x_j^{g^{\psi^t}},  \ldots, x_{k-1}),$$
where  $t = (i-j+k-1) \div k$,
is an automorphism of  $\ZZ(\Delta,k)$.  For a fixed value of $j$ this defines an action of $H$ on $\ZZ(\Delta, k)$.
\end{corollary}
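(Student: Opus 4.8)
The plan is to reduce everything to Lemma~\ref{LAction}. That lemma already tells us that if $\g=\{g_t\}_{t\in\ZZ}$ is a family of permutations of $\V\Delta$ with each consecutive pair $(g_t,g_{t+1})$ a TF-automorphism of $\Delta$, then $[\g]_j$ is an automorphism of $\ZZ(\Delta,k)$. So the first step is simply to produce, from the data $g\in H$ and $\psi$, the right family $\g$. The natural choice is $g_t := g^{\psi^t}$ for $t\in\ZZ$. By the remark immediately following Definition~\ref{def:TF12}, since $H$ is a $\psi$-stable subgroup of $\Haut(\Delta)$ we have $g^{\psi^t}\in H$ for all $t$ and $(g^{\psi^t},g^{\psi^{t+1}})\in\TFaut(\Delta)$ for all $t$ — which is exactly the hypothesis of Lemma~\ref{LAction}. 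Feeding this $\g$ into the lemma, the resulting automorphism $[\g]_j$ sends $(i;x_0,\ldots,x_{k-1})$ to $(i;x_0,\ldots,x_j^{g^{\psi^t}},\ldots,x_{k-1})$ with $t=(i-j+k-1)\div k$, which is precisely the definition of $[g]_{j,\psi}$. Hence $[g]_{j,\psi}$ is an automorphism of $\ZZ(\Delta,k)$.

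For the second assertion — that for fixed $j$ the assignment $g\mapsto [g]_{j,\psi}$ is an action of $H$ on $\ZZ(\Delta,k)$ — I would verify the two defining properties directly. First, $[\id_H]_{j,\psi}$ is the identity map, since $\id_H^{\psi^t}=\id_H$ acts trivially on every coordinate. Second, for $g,h\in H$ one computes $[g]_{j,\psi}\,[h]_{j,\psi}$ on a vertex $(i;x_0,\ldots,x_{k-1})$: applying $[g]_{j,\psi}$ replaces $x_j$ by $x_j^{g^{\psi^t}}$ with $t=(i-j+k-1)\div k$, and since the first coordinate $i$ is unchanged, applying $[h]_{j,\psi}$ afterwards uses the \emph{same} value $t$ and replaces $x_j^{g^{\psi^t}}$ by $(x_j^{g^{\psi^t}})^{h^{\psi^t}} = x_j^{(g h)^{\psi^t}}$, using that $\psi^t$ is an automorphism of $H$ (so $g^{\psi^t}h^{\psi^t}=(gh)^{\psi^t}$). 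This equals the effect of $[gh]_{j,\psi}$, so $[g]_{j,\psi}[h]_{j,\psi}=[gh]_{j,\psi}$, confirming that we have a (left, or right, depending on convention) action.

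I do not expect any genuine obstacle here: the corollary is essentially a packaging of Lemma~\ref{LAction} together with a one-line group-action check. The only point requiring a moment's care is making sure the exponent $t=(i-j+k-1)\div k$ is correctly inherited from the lemma — it depends only on $i$ and $j$, not on the permutation being applied, which is exactly why the composition in the second part goes through with a single common value of $t$. Everything else is a routine substitution.
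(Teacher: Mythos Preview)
Your proposal is correct and follows essentially the same approach as the paper: apply Lemma~\ref{LAction} to the sequence $\g=\{g^{\psi^t}\}_{t\in\ZZ}$ to get that $[g]_{j,\psi}$ is an automorphism, and then use that $\psi$ is an automorphism of $H$ to conclude that $g\mapsto[g]_{j,\psi}$ is a group action. You spell out the identity and composition checks in more detail than the paper (which just says ``since $\psi$ is an automorphism of $H$ we get an action''), but the argument is the same.
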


\begin{proof}
That $[g]_{j, \psi}$ is an automorphism of $\Gamma=\ZZ(\Delta, k)$ follows immediately by applying Lemma~\ref{LAction} to the sequence $\g=\{g^{\psi^t}\}_{t\in \ZZ}$.
Since $\psi$ is an automorphism of $H$ we get an action of $H$ on $\ZZ(\Delta, k)$.
 \end{proof}

Note that if $g, h \in H$ and $j$ and $j'$ are two distinct elements in $\ZZ_k$, then $[g]_{j, \psi}$ and
$[h]_{j', \psi}$ commute. In particular, the group generated by all the automorphisms $[g]_{j, \psi}$, with $g\in H$ and $j\in \ZZ_k$,
is isomorphic to the group $H^k$. Recall that $s$ denotes the automorphism of $\ZZ(\Delta, k)$ defined by 
$(i;x_0, \ldots, x_{k-2}, x_{k-1})^s = (i+1;x_{k-1}, x_0, \ldots, x_{k-2})$.  Moreover, note that if $0\leq j\leq k-2$ then $([g]_{j, \psi})^s=s^{-1}[g]_{j, \psi}s = [g]_{j+1, \psi}$ but if $j=k-1$ then $([g]_{j, \psi})^s= [g^{\psi^{-1}}]_{0, \psi}$, implying that the group  generated by $s$ and all the elements $[g]_{j, \psi}$,
with $g \in H$ and $j\in \ZZ_k$
(which we will denote by $\tHpsi$, but in the case where $\psi$ is the identity automorphism we will just write $\tH$),
is isomorphic to the semidirect product $H^k\rtimes \langle s\rangle$. If the automorphism $\psi$ has  order $l<\infty$ then the automorphism of $\tHpsi$ induced by conjugation by $s$ has order $kl$.  
%If $g_0, \ldots, g_{k-1}\in \aut(\Delta)$,
%then we will write $[g_0,\ldots,g_{k-1}]_\psi$ to denote the product
%$[g_0]_{0, \psi} \cdots[g_{k-1}]_{k-1, \psi}$ in $\tHpsi$.
%\roggicomment{Corrected how conjugation by $s$ works.}

\section{$k$-arc transitivity}

The next lemma justifies the introduction of the concepts of $\psi$-stable subgroups and $\psi$-arc-transitive subgroups of $\Haut(\Delta)$.

\begin{lemma}\label{Lkarc}\label{LArc-transitive}
Suppose that $\Delta$ is a digraph such that $\CDHC(\Delta)$ is vertex- and arc-transitive.  Then $\ZZ(\Delta, k)$ is $k$-arc-transitive.  
In the case that $H$ is a $\psi$-arc-transitive, $\psi$-stable subgroup of $\Haut(\Delta)$,
 then the group $\tHpsi$ is $k$-arc-transitive on $\ZZ(\Delta,k)$.
\end{lemma}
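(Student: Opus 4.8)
The plan is to establish that the group $\tHpsi$ acts transitively on the set of $k$-arcs of $\Gamma = \ZZ(\Delta,k)$, which will in particular imply the first (unadorned) statement once we take $H$ to be a $\psi$-arc-transitive $\psi$-stable subgroup provided by the hypothesis that $\CDHC(\Delta)$ is vertex- and arc-transitive (so that in fact the whole of $\Haut(\Delta)$, equipped with a suitable $\psi$, is $\psi$-arc-transitive; this uses the correspondence between automorphisms of $\CDHC(\Delta)$ and TF-automorphisms of $\Delta$ recalled earlier). Since the action of $\tHpsi$ is by automorphisms, it suffices to show that every $k$-arc can be mapped to a single fixed reference $k$-arc, or equivalently that the action is transitive on $k$-arcs.

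First I would understand the structure of a $k$-arc of $\Gamma$ explicitly. Starting from a vertex $(i; x_0, \ldots, x_{k-1})$, a $k$-arc $(w_0, w_1, \ldots, w_k)$ has $w_m$ lying in layer $\Gamma_{i+m}$, and passing from $w_{m-1}$ to $w_m$ changes precisely the coordinate indexed by $(i+m-1) \bmod k$, along an arc of $\Delta$. Over $k$ consecutive steps each of the $k$ coordinates is modified exactly once, so a $k$-arc traces out, coordinate by coordinate, exactly one arc of $\Delta$ per coordinate. Concretely, after relabelling, the data of a $k$-arc amounts to: the starting index $i$ (which we can normalize using the automorphism $s$, since $s$ shifts the index by $1$ and permutes coordinates cyclically), together with, for each coordinate $j$, an arc $(a_j, b_j)$ of $\Delta$ describing how coordinate $j$ evolves during the unique step at which it is touched; and the coordinates not yet touched at the initial vertex must match the starting vertex's entries. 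I would make this bookkeeping precise and then reduce, via a power of $s$, to the case where the initial vertex lies in $\Gamma_0$.

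Next, having fixed $i=0$, I would use the commuting automorphisms $[g]_{j,\psi}$ from Corollary~\ref{CAction}, one coordinate at a time. For coordinate $j$, the step at which it is modified is step $j+1$ (when $i=0$), and the relevant element acting on that coordinate at the two relevant layers is $g$ and $g^{\psi^t}$ for the appropriate $t$; crucially, the $\psi$-arc-transitivity hypothesis says exactly that for any arc $(a_j,b_j)$ and any target arc $(z_j,w_j)$ of $\Delta$ there is $g \in H$ with $(a_j^{g}, b_j^{g^{\psi^{t}}}) = (z_j, w_j)$ — after accounting for the shift $\psi^t$, which is an automorphism of $H$ and hence does not shrink the orbit. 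Applying $[g]_{j,\psi}$ for this $g$ adjusts coordinate $j$ of every vertex in the $k$-arc simultaneously and consistently (this is exactly the content of Lemma~\ref{LAction}/Corollary~\ref{CAction}), and because distinct-$j$ automorphisms commute, I can do this independently for $j = 0, 1, \ldots, k-1$ and thereby carry the given $k$-arc onto a fixed reference $k$-arc. Finally I would check that the $\psi^t$-twist is harmless: since $\psi \in \Aut(H)$, $\psi$-arc-transitivity of $H$ is equivalent to the same statement with $g^{\psi^t}$ in place of $g^\psi$, so the argument goes through uniformly for every layer index.

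The main obstacle I anticipate is purely bookkeeping: correctly tracking which coordinate is altered at which step of a $k$-arc starting in layer $i$, getting the index $t = (i-j+k-1)\div k$ to line up with the step at which coordinate $j$ is touched, and verifying that the $\psi$-twist appearing in $[g]_{j,\psi}$ on the terminal layer of that step is precisely the one neutralized by $\psi$-arc-transitivity. Once the reference $k$-arc is chosen with its initial vertex in $\Gamma_0$ and its coordinate-arcs taken to be some fixed arc of $\Delta$ repeated, the rest is a clean induction/product over the $k$ commuting coordinate actions, plus one application of a power of $s$ at the start to normalize the layer.
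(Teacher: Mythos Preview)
Your treatment of the second claim (the $\tHpsi$ case) is essentially the paper's argument: normalise the initial layer with a power of $s$, then use the commuting coordinate-wise automorphisms $[g]_{j,\psi}$ together with $\psi$-arc-transitivity to adjust each coordinate independently. The bookkeeping you flag is exactly what is needed and goes through.

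The gap is in your reduction of the \emph{first} claim to the second. You assert that vertex- and arc-transitivity of $\CDHC(\Delta)$ yields a $\psi$-arc-transitive $\psi$-stable subgroup $H\le\Haut(\Delta)$, but you give no argument for the existence of a single automorphism $\psi$ of $H$ with $(g,g^\psi)\in\TFaut(\Delta)$ for \emph{every} $g\in H$. Arc-transitivity of $\CDHC(\Delta)$ tells you only that for each pair of arcs there is \emph{some} TF-pair $(g,h)$ mapping one to the other; it does not say that $h$ depends functorially on $g$ via a fixed group automorphism. The paper avoids this issue by reversing the logical order: it proves the first claim directly via Lemma~\ref{LAction}, which needs only a bi-infinite family $\{h_t\}$ with each $(h_t,h_{t+1})\in\TFaut(\Delta)$ (no $\psi$ required), and then observes that the second claim is the special case where the family is $\{g^{\psi^t}\}$. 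To repair your argument, either supply a proof that a suitable $\psi$ always exists under the hypothesis on $\CDHC(\Delta)$, or switch to Lemma~\ref{LAction} for the first claim and derive the second as a specialisation.
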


\begin{proof}
Since $\aut(\ZZ(\Delta,k))$ is clearly vertex-transitive, it suffices to show that for any
two $k$-arcs $\gamma_1$ and $\gamma_2$ of $\ZZ(\Delta,k)$ with their initial vertex having the first coordinate $0$,
there exists an automorphism mapping $\gamma_1$ to $\gamma_2$. Note that two such $k$-arcs must be of the form
\begin{equation*}
\gamma_1=(0;x_0, x_1, \ldots,x_{k-1}), (1;y_0, x_1, \ldots, x_{k-1}), %(2,y_0, y_1, \ldots, x_{k-1}),
             \ldots,  (k; y_0,y_1, \ldots, y_{k-1}),
\end{equation*}
and
\begin{equation*}
\gamma_2=(0;z_0, z_1, \ldots,z_{k-1}), (1;w_0, z_1, \ldots, z_{k-1}), %(2,w_0, w_1, \ldots, z_{k-1}),
             \ldots,  (k; w_0,w_1, \ldots, w_{k-1}),
\end{equation*}
where, for each $j\in\ZZ_k$,
the pairs $(x_j,y_j)$ and $(z_j,w_j)$ are arcs in $\Gamma$.

Now find for $j=0, \ldots, k-1$ families $\h^j=\{h_t^j\}_{t\in \ZZ}$ of elements in $\Hautde$ such that $(h_t^j, h_{t+1}^j)$ is a a TF-automorphism for all $t\in \ZZ$ and $(x_j^{h_0^j},y_j^{h_1^j}) = (z_j,w_j)$.  By direct calculation one shows that 
$[\h^0]_0 \cdots [\h^{k-1}]_{k-1}$ maps $\gamma_1$ to $\gamma_2$.

The second claim in the Lemma follows directly from the first claim.
\end{proof}

\noindent
{\bf Remark.}  In the case where $\Delta$ is arc-transitive then this Lemma is essentially the same as the first statement in \cite[Proposition~2.11]{Praeger1989} and the proofs use the same basic idea.

%Here we give two specific examples of digraphs $\Delta$ giving rise to digraphs $\ZZ(\Delta,k)$ in which
%the full automorphism group is $k$-arc-transitive but not $(k+1)$-arc-transitive.

\begin{corollary}\label{CArc-transitive}
Let $\Delta$ be a connected arc-transitive digraph.  Suppose that $H\leq \autde$ acts arc-transitively on $\Delta$.  Then the group $G=H^k\rtimes \ZZ$ acts $k$-arc-transitively on $\ZZ(\Delta,k)$, where $\ZZ$ acts on $H^k$ by cyclically permuting the factors.  
\end{corollary}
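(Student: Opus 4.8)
The plan is to read this corollary off directly from the machinery already assembled, in essentially three steps. First I would set $\psi=\id_H$ and note, via Example~\ref{Eid-transitive}, that every subgroup $H\le\autde$ is $\id_H$-stable in $\Haut(\Delta)$, and that arc-transitivity of $H$ on $\Delta$ makes $H$ moreover $\id_H$-arc-transitive in the sense of Definition~\ref{def:TF12}. Then the second statement of Lemma~\ref{LArc-transitive}, applied with this $H$ and $\psi=\id_H$, gives at once that $\tH$ (i.e.\ $\tHpsi$ with $\psi=\id_H$) acts $k$-arc-transitively on $\ZZ(\Delta,k)$.

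It then remains to identify the concrete group $\tH$ with the abstract group $G=H^k\rtimes\ZZ$ of the statement. Here I would quote the discussion following Corollary~\ref{CAction}, specialized to $\psi=\id_H$: the automorphisms $[g]_{j}$ with $g\in H$ and $j\in\ZZ_k$ generate a subgroup isomorphic to $H^k$ (those with distinct $j$ commute), and together with $s$ they generate a group isomorphic to $H^k\rtimes\langle s\rangle$. Since $s$ maps $(i;x_0,\dots,x_{k-1})$ to $(i+1;x_{k-1},x_0,\dots,x_{k-2})$ it shifts the first coordinate, hence has infinite order, so $\langle s\rangle\cong\ZZ$. Finally the relations $s^{-1}[g]_js=[g]_{j+1}$ for $0\le j\le k-2$ and $s^{-1}[g]_{k-1}s=[g]_{0}$ (the latter using $\psi^{-1}=\id$) show that conjugation by $s$ cyclically permutes the $k$ factors. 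Hence $\tH\cong H^k\rtimes\ZZ=G$ with $\ZZ$ acting as stated, and since $\tH$ is $k$-arc-transitive on $\ZZ(\Delta,k)$, so is $G$.

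I do not expect a genuine obstacle, as this is a corollary. The only point demanding any care is the last step --- matching the abstract semidirect product $H^k\rtimes\ZZ$ (with the cyclic-permutation action) to the concrete subgroup $\tH\le\aut(\ZZ(\Delta,k))$ --- and this rests entirely on the conjugation relations already recorded after Corollary~\ref{CAction} and on the infinite order of $s$. The connectedness of $\Delta$ is a natural standing hypothesis (it is what makes $\ZZ(\Delta,k)$ a connected digraph once $\CDHC(\Delta)$ is connected) but plays no role in the $k$-arc-transitivity argument itself, so I would mention it only in passing.
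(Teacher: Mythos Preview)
Your proposal is correct and is precisely the intended argument: the paper states this corollary without proof, treating it as immediate from Lemma~\ref{LArc-transitive} applied with $\psi=\id_H$ (via Example~\ref{Eid-transitive}) together with the identification $\tH\cong H^k\rtimes\ZZ$ recorded in Section~\ref{SAutomorphism}. Your explicit verification that $s$ has infinite order and that conjugation by $s$ cyclically permutes the factors is exactly what underlies the paper's unproved assertion.
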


Let us call a digraph $\Delta$ {\em stable} if for each $x\in \V\Delta$
the pair $\{(x,0), (x,1)\}$ is a block of imprimitivity of $\Aut(\CDHC(\Delta))$.  (A set $A\subseteq \V\Delta$ is called a {\em block of imprimitivity} for a group $G$ acting on $\V\Delta$ if for every $g\in G$ either $A=A^g$ or $A\cap A^g=\emptyset$.)  An automorphism fixing either of the vertices $(x,0)$ or $(x,1)$ must then also fix the other one.
%See \cite{} more on stable graphs.

%For a positive integer $t$, let $R_t^-$ and $R_t^+$ denote
%the corresponding reachability relations on the vertex-set of the digraph (see ???).

\begin{example}\label{Ecomplete}
Define $\vec{K}_{d}$ as the complete digraph with $d$ vertices ($(v,w)$ is an arc for all vertices $v$ and $w$ with $v\neq w$).  The arc set of $\CDHC(\vec{K}_{d})$ consists of all ordered pairs of vertices $((v,0),(w,1))$ with $v\neq w$.  Hence $\vec{K}_{d}$ is stable.  By the above the graph $\ZZ(\vec{K}_{d}, k)$ is $k$-arc-transitive and has out-valency $d-1$.
\end{example}

Our aim is to show that the digraphs $\ZZ(\Theta_n, k)$ with $3\leq n\leq \infty$  and $\ZZ(\vec{K}_{d+1}, k)$ with $d\geq 2$ are not $(k+1)$-arc-transitive.  This requires a detailed analysis of the structure of these graphs, but a simpler argument can be used to show that these graphs are not highly-arc-transitive (excluding the case $\ZZ(\Theta_\infty, k)$).

\begin{lemma}  \label{LNotHAT}
(a)  The digraphs $\ZZ(\Theta_n, k)$ with $3\leq n<\infty$ are not highly-arc-transitive.

(b)  Let $\Delta$ be a finite connected stable digraph and $k$ a positive integer.
Then $\Gamma = \ZZ(\Delta,k)$ is not highly-arc-transitive.
\end{lemma}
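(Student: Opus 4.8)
The plan is to exploit the fact that in a highly-arc-transitive digraph the automorphism group acts transitively on the set of $m$-arcs for every $m$, and to derive a contradiction with finiteness. The key structural observation is that $\Gamma = \ZZ(\Delta,k)$ has Property Z via the morphism $\varphi$, so every $m$-arc projects to a directed segment in $\vZZ$; an automorphism of a connected digraph with Property Z (when $\Gamma$ is not, say, a disjoint union of lines) must respect the fibres $\Gamma_i$ up to a global shift, hence $\varphi$ is essentially canonical. Fix a vertex $u_0 = (0; x_0,\dots,x_{k-1})$ in $\Gamma_0$ and consider, for large $m$, the set $D_m(u_0)$ of terminal vertices of $m$-arcs starting at $u_0$; these all lie in $\Gamma_m$. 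First I would show that if $\Gamma$ were highly-arc-transitive, then $|D_m(u_0)|$ would be the same for all $u_0$ (by vertex-transitivity) and, more importantly, the stabiliser $\Aut(\Gamma)_{u_0}$ would act transitively on the $m$-arcs out of $u_0$, hence transitively on $D_m(u_0)$.

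The next step is to compute $D_m(u_0)$ directly from the construction. Walking $m$ steps forward from $(0;x_0,\dots,x_{k-1})$, the coordinate with index $j \equiv i \bmod k$ gets updated along an edge of $\Delta$ at step $i$; so after $m \ge k$ steps each coordinate has been updated $\lfloor m/k\rfloor$ or $\lceil m/k\rceil$ times, and $D_m(u_0)$ is exactly the set of $(m; w_0,\dots,w_{k-1})$ where $w_j$ is reachable from $x_j$ by a directed walk in $\Delta$ of the appropriate length. Since $\Delta$ is finite and connected, for $m$ large enough every vertex of $\Delta$ is reachable by such a walk (here $\Theta_n$ has a loop at every vertex, which guarantees walks of all lengths $\ge 0$; for the general stable $\Delta$ one uses connectivity of $\CDHC(\Delta)$, equivalently that $\Delta$ is connected and the relevant walk lengths are eventually all realisable), so $D_m(u_0) = \{m\} \times V^k$, which has fixed finite cardinality $|V|^k$ for all large $m$.

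Now I would bring in the stable/loop structure to get the contradiction. The point is that the forward $m$-arc from $u_0$ is very far from unique but the \emph{fibre} structure together with stability pins down too much: consider two $m$-arcs out of $u_0$ that reach the same terminal vertex but differ in an earlier coordinate value — an automorphism of $\Gamma$ fixing $u_0$ and mapping one to the other must, by stability applied inside the alternets (each alternet is a copy of $\CDHC(\Delta)$, and $\Delta$ stable means $\{(x,0),(x,1)\}$ is a block), fix the relevant coordinate entries, a contradiction. Concretely, I would pick an $m$-arc $\gamma_1$ and an $(m+1)$-arc $\gamma_2$ extending comparable data and show that an automorphism carrying the $(m+1)$-arc-type of $\gamma_1$'s natural extension to that of $\gamma_2$ cannot exist because it would have to move a coordinate $x_j$ while being forced to fix it. For part (a), $\Theta_n$ with $n < \infty$ is finite, connected and one checks directly it is stable (the arc set of $\CDHC(\Theta_n)$ is $\{((x,0),(x,1)),((x,0),(x+1,1))\}$, so $\{(x,0),(x,1)\}$ is indeed a block), so (a) follows from (b).

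The main obstacle I anticipate is making the ``stability forces coordinates to be fixed'' step precise: one must show that an automorphism of the whole digraph $\ZZ(\Delta,k)$, not merely of one alternet, that fixes a vertex is constrained on each coordinate by the block structure of $\CDHC(\Delta)$ — this requires tracking how the alternets through a fixed vertex overlap (they share exactly the two fibres $\Gamma_i, \Gamma_{i+1}$ worth of vertices differing in one coordinate) and arguing that the automorphism's action on coordinate $j$ is governed by a TF-automorphism of $\Delta$ whose induced map on $\CDHC(\Delta)$ must respect blocks. Once that local rigidity is in hand, counting distinct $m$-arcs out of $u_0$ versus the orbit size under $\Aut(\Gamma)_{u_0}$ gives the contradiction with high-arc-transitivity. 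A cleaner alternative, which I would try first, is to avoid $m$-arc counting entirely: show directly that for $m$ large the number of $m$-arcs with a fixed initial vertex and fixed terminal vertex is not constant as the terminal vertex varies over $\Gamma_m$ — impossible under high-arc-transitivity since $\Aut(\Gamma)_{u_0}$ would act transitively on such $m$-arcs for each terminal vertex and vertex-transitivity of $\Gamma_m$ would force equal counts.
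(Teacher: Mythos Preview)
Your plan contains a concrete error and misses a much simpler argument. First, $\Theta_n$ is \emph{not} stable: the map $(y,\epsilon)\mapsto(-y+\epsilon,\epsilon)$ on $\ZZ_n\times\{0,1\}$ is an automorphism of $\CDHC(\Theta_n)$ (it sends the arc $((y,0),(y,1))$ to $((-y,0),(-y+1,1))$ and the arc $((y,0),(y+1,1))$ to $((-y,0),(-y,1))$) which fixes $(0,0)$ but sends $(0,1)$ to $(1,1)$; thus $\{(0,0),(0,1)\}$ is not a block and part~(a) does not reduce to part~(b) as you claim. More importantly, the whole $m$-arc machinery is unnecessary. Stability of $\Delta$ says exactly that an automorphism of $\CDHC(\Delta)$ fixing one side of the bipartition pointwise must be the identity. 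Hence if $g\in G=\Aut(\Gamma)$ fixes $\Gamma_i$ pointwise, then on each alternet between $\Gamma_i$ and $\Gamma_{i+1}$ (a copy of $\CDHC(\Delta)$) it fixes one side and therefore the other, so $g$ fixes $\Gamma_{i+1}$ pointwise; by induction $G_{(\Gamma_0)}=1$. Since $\Gamma_0$ is finite, $G_{(\Gamma_0)}$ has finite index in $G_v$ for $v\in\Gamma_0$, so $G_v$ is finite, and a digraph of out-valency at least $2$ with finite vertex stabilisers cannot be highly-arc-transitive. For part~(a) the same scheme goes through without stability, because distinct vertices of $\Theta_n$ have distinct in-neighbourhoods $\{w,w-1\}$, so each vertex of $\Gamma_{i+1}$ is already determined by its set of in-neighbours in $\Gamma_i$, and again $G_{(\Gamma_0)}=1$.

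Your ``cleaner alternative'' of comparing numbers of $m$-arcs between fixed endpoints is a different and legitimate strategy, but note that it does not visibly use the stability hypothesis: you would still owe a proof that for every finite stable $\Delta$ the walk-count function in $\Delta$ is non-constant for some length, which is not obvious and in any case a detour compared with the finite-stabiliser argument above. The ``stability forces coordinates to be fixed'' obstacle you flag in your main approach is real, and the way past it is precisely to work with the pointwise stabiliser of an entire fibre rather than of a single vertex.
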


\begin{proof}  We prove part (b).   A similar argument applies to the graphs $\ZZ(\Theta_n, k)$.

Set $G=\autga$.  Recall that the vertex set of $\Gamma$ is equal to the set $\ZZ\times V^k$ and $\Gamma_i=\{i\}\times V^k$ where $V=\V\Delta$.
If $\Delta$ is stable then any automorphism of $\CDHC(\Delta)$ that fixes all the vertices in one part of the bipartition of $\CDHC(\Delta)$ is trivial.  Any automorphism that fixes all the vertices in $\Gamma_0$ must also fix all vertices in $\Gamma_{-1}$ and $\Gamma_1$ and so on.  We conclude that the subgroup $G_{(\Gamma_0)}$ is trivial.  If $v$ is a vertex in $\Gamma_0$ then $G_{(\Gamma_0)}$ has finite index in $G_v$ and hence $G_v$ is finite.  Therefore it is impossible that $G$ acts highly-arc-transitively on $\Gamma$.
\end{proof}

With a bit more work we can do better and show that these graphs cannot be $(k+1)$-arc-transitive.

\begin{theorem}\label{TNot-k+1}
(a)  The digraph $\ZZ(\Theta_n, k)$ for $3\leq n\leq \infty$ is sharply $k$-arc-transitive.  

(b)    Let $\Delta$ be a connected stable directed graph with vertex-set $V$ and $k$ a positive integer.
Assume that whenever $(v,w)$ is an arc in $\Delta$ then $(w,v)$ is also an arc in $\Delta$.
 If $\CDHC(\Delta)$ is arc-transitive  then $\ZZ(\Delta,k)$ is sharply $k$-arc-transitive.
\end{theorem}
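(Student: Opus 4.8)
\textit{The plan.} The first task is to get $k$-arc-transitivity from the machinery already in place, and the second — which is the real work — is to show that the stabiliser of a $k$-arc cannot permute transitively the ways of extending it to a $(k+1)$-arc. For (a) I would invoke Example~\ref{Ecycles}: the dihedral group $H=\langle a,b\rangle$ is a $\psi$-arc-transitive $\psi$-stable subgroup of $\Haut(\Theta_n)$ for every $n$ with $3\le n\le\infty$, so $\tHpsi$ is $k$-arc-transitive by Lemma~\ref{LArc-transitive}. For (b), stability forces every TF-automorphism $(g,h)$ of $\Delta$ to satisfy $g=h\in\autde$ (the block $\{(x,0),(x,1)\}$ must be carried to a block), so $\Aut(\CDHC(\Delta))\cong\autde$ acting diagonally; hence arc-transitivity of $\CDHC(\Delta)$ is exactly arc-transitivity of $\Delta$, and $H=\autde$ is an $\id_H$-arc-transitive $\id_H$-stable subgroup of $\Haut(\Delta)$ (Example~\ref{Eid-transitive}), so again $\tH$ is $k$-arc-transitive. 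From now on I assume, as holds in all cases of interest, that the out-valency of $\Delta$ is at least $2$, and I write $\Gamma=\ZZ(\Delta,k)$, $G=\autga$.

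Fix a $k$-arc $\gamma_0=(v^{(0)},\dots,v^{(k)})$ with $v^{(0)}=(0;x_0,\dots,x_{k-1})$, $v^{(j)}=(j;y_0,\dots,y_{j-1},x_j,\dots,x_{k-1})$ and $(x_j,y_j)\in\E\Delta$ for $0\le j\le k-1$. Since $\Gamma$ is $k$-arc-transitive, it suffices to show that $G_{\gamma_0}$ does not act transitively on the extensions of $\gamma_0$, that is, on $\out(v^{(k)})$, a set of size $|\out_\Delta(y_0)|\ge 2$ that I identify with $\out_\Delta(y_0)$ via $z\mapsto(k+1;z,y_1,\dots,y_{k-1})$. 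For $0\le j\le k-1$ let $A_j$ be the alternet through the arc $(v^{(j)},v^{(j+1)})$, and let $A_k$ be the alternet through $v^{(k)}$ and the extensions; as in Section~\ref{SConstruction} each $A_j$ is a copy of $\CDHC(\Delta)$, all of whose arcs run from $A_j\cap\Gamma_j$ to $A_j\cap\Gamma_{j+1}$. Any $\sigma\in G_{\gamma_0}$ fixes $v^{(j)}$, hence fixes $A_j$ setwise, necessarily preserving the two sides $A_j\cap\Gamma_j$ and $A_j\cap\Gamma_{j+1}$, and for $j\le k-1$ it fixes the arc $(v^{(j)},v^{(j+1)})$ of $A_j$.

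The crux is to transport this rigidity from $A_0,\dots,A_{k-1}$ onto $A_k$, and the main obstacle is precisely that two distinct alternets of $\Gamma$ meet in at most one vertex, so the constraint of fixing $\gamma_0$ must be propagated through the ambient digraph. The device that makes this work is that the $0$-th coordinate is constant along any directed path contained in $\Gamma_1\cup\cdots\cup\Gamma_k$, since such a path uses only transitions $\Gamma_m\to\Gamma_{m+1}$ with $1\le m\le k-1$, each modifying coordinate $m\ne 0$. Fix $\sigma\in G_{\gamma_0}$ and $u\in V$. The vertex $q_u=(1;u,x_1,\dots,x_{k-1})$ lies in $A_0\cap\Gamma_1$, and changing coordinate $m$ from $x_m$ to $y_m$ for $m=1,\dots,k-1$ in turn (using $(x_m,y_m)\in\E\Delta$) traces a directed path inside $\Gamma_1\cup\cdots\cup\Gamma_k$ from $q_u$ to $p_u=(k;u,y_1,\dots,y_{k-1})\in A_k\cap\Gamma_k$. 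Applying $\sigma$ gives a directed path of length $k-1$ starting at $\sigma(q_u)\in A_0\cap\Gamma_1$, still inside $\Gamma_1\cup\cdots\cup\Gamma_k$, and ending at $\sigma(p_u)\in A_k\cap\Gamma_k$; hence $\sigma(q_u)$ and $\sigma(p_u)$ have the same $0$-th coordinate. Equivalently, under the bijections $u\mapsto q_u$ and $u\mapsto p_u$ with $V$, the permutation of $A_k\cap\Gamma_k$ induced by $\sigma$ equals the permutation of $A_0\cap\Gamma_1$ induced by $\sigma$. In case (a) the automorphism group of $\CDHC(\Theta_n)$, whose underlying graph is a $2n$-cycle or a two-way infinite path with the two sides necessarily preserved, has trivial arc-stabiliser; so $\sigma|_{A_0}=\id$, whence $\sigma$ fixes $A_k\cap\Gamma_k$ pointwise, and since distinct vertices of $\Theta_n$ have distinct in-neighbourhoods this forces $\sigma|_{A_k}=\id$, so $\sigma$ fixes every extension. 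In case (b) $\sigma|_{A_j}$ acts diagonally through some $g_j\in\autde$, the equality above reads $g_k=g_0$, the element $g_0$ fixes $x_0$ (as $\sigma$ fixes $v^{(0)}$), and $(x_0,y_0)\in\E\Delta$ gives $(y_0,x_0)\in\E\Delta$, so $x_0\in\out_\Delta(y_0)$ and $\sigma$ fixes the extension indexed by $x_0$. In both cases every element of $G_{\gamma_0}$ fixes at least one extension while there are at least two, so $G_{\gamma_0}$ is not transitive on the extensions and $\Gamma$ is not $(k+1)$-arc-transitive.
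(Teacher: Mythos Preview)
Your proof is correct and follows essentially the same route as the paper's: both hinge on the observation that the $0$-th coordinate is constant along any directed $(k-1)$-arc lying in $\Gamma_1\cup\cdots\cup\Gamma_k$, and use this to transport the stabiliser's action on the alternet $A_0$ over to the alternet $A_k$. The paper argues by contradiction on a single explicit $k$-arc (all coordinates equal), while you work with a generic $k$-arc and give a direct argument via the bijections $u\mapsto q_u$, $u\mapsto p_u$; the content is the same, and your formulation makes the mechanism (that the induced permutations of $A_0\cap\Gamma_1$ and $A_k\cap\Gamma_k$ coincide under the $0$-th coordinate identification) a bit more transparent.
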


\begin{proof}
(a)  Set $\Gamma=\ZZ(\Theta_n, k)$.  The statement that $\Gamma$ is $k$-arc-transitive follows from Lemma~\ref{Lkarc}.

In what follows we assume $n<\infty$ but the argument is the same if $n=\infty$.
Let $\gamma$ be the $k$-arc $(0;0,\ldots,0), (1;0,\ldots,0), \ldots, (k;0,\ldots,0)$. The vertex $(k;0,\ldots,0)$ has two out-neighbours, the vertices $(k+1;0,0,\ldots,0)$ and $(k+1;1, 0,\ldots, 0)$.  We show that the subgroup fixing $\gamma$ must also fix these two vertices.

%The alternets in $\Gamma$ are alternating cycles of length $2n$.  If an automorphism fixes an arc in an alternet then every vertex is the alternet is fixed and if an automorphism fixes a source in the alternet and transposes its two neighbours in the alternet then there is only one other vertex in that alternet is fixed and.

Suppose $g$ is an automorphism that fixes the $k$-arc $\gamma$ and 
$$(k+1;0, 0,\ldots, 0)^g=(k+1;1, 0,\ldots, 0).$$ 
The alternet containing the vertices $(k;0,0,\ldots, 0)$  and $(k+1;0, 0,\ldots, 0)$ is an alternating $2n$ cycle. The vertex $(k;0,0,\ldots, 0)$ is fixed by $g$ and also, depending on whether $n$ is even or odd,  either the vertex $(k;n/2, 0, \ldots, 0)$ or $(k+1;(n+1)/2,0,\ldots, 0)$ but all the other vertices in the alternet are moved.  Then  $(k;1,0,\ldots, 0)^g=(k;j, 0,\ldots, 0)$ and $j\neq 1$. On the other hand, $g$ fixes all the vertices in the alternet containing the arc $((0,0, \ldots, 0), (1;0, \dots, 0))$. In particular $g$ fixes the vertex $(1;1,0,\ldots, 0)$.  So, $g$ takes the $(k-1)$-arc $(1;1,0, \dots, 0), (2;1,0, \dots, 0), \ldots, (k;1,0, \dots, 0)$ to a $(k-1)$-arc whose initial vertex is $(1;1,0, \dots, 0)$ and terminal vertex is $(k;j,0, \dots, 0)$.  Both these $(k-1)$-arcs are contained in the subdigraph spanned by $\Gamma_1\cup\cdots\cup\Gamma_k$.  If $(i; v_0, \ldots, v_{n-1})$ and $(i+1; w_0, \ldots, w_{n-1})$ are adjacent vertices in this subdigraph then $v_0=w_0$.  Hence it is impossible that there is a $(k-1)$-arc in this subdigraph having $(1;1,0, \dots, 0)$ as an initial vertex and $(k;j,0, \dots, 0)$, with $j\neq 1$,  as a terminal vertex.

Now we have reached a contradiction and it is impossible that such an automorphism $g$ exists.  Whereupon, we see that any automorphism fixing the arc $\gamma$ must also fix the two vertices $(k+1;0,0,\ldots,0)$ and $(k+1;1, 0,\ldots, 0)$ and hence $\ZZ(\Theta_n, k)$ is not $(k+1)$-arc-transitive.

(b)   Set $\Gamma = \ZZ(\Delta,k)$.  The main idea of the proof of part (b) is the same as in the proof of part (a).

Let $v$ and $w$ be adjacent vertices in $\Delta$.  Take the $k$-arc
$$
\gamma=(0;v, v, \ldots,v), (1;w, v, \ldots, v), (2;w, w, \ldots, v), \ldots,  (k; w,w, \ldots, w).
$$
Suppose $g$ is an automorphism fixing each vertex of this $k$-arc and that $g$ does not fix the vertex $(k+1; v,w, \ldots, w)$ that is an out-neighbour of $(k; w,w, \ldots, w)$.  The alternet containing the vertices $(k;w, w, \ldots, w)$ and $(k+1;v, w, \ldots, w)$ is invariant under $g$.  Since $g$ does not fix the vertex $(k+1;v, w, \ldots, w)$ then the stability condition implies that $g$ does not fix $(k;v, w, \ldots, w)$ and $(k;v, w, \ldots, w)^g=(k; u, w, \ldots, w)$ for some $u\neq v$.  
Using the assumption that the digraph $\Delta$ is stable we see that since $g$ fixes  $(0;v, v, \ldots,v)$ the stability condition implies that $g$ also fixes $(1;v, v, \ldots, v)$.   Now we see that $g$ maps the $(k-1)$-arc
$$
(1;v, v, \ldots, v), (2;v, w, \ldots, v), \ldots,  (k; v,w, \ldots, w)
$$
to a $(k-1)$-arc having $(1;v, v, \ldots, v)$ as an initial vertex and $(k;u, w, \ldots, w)$ as a terminal vertex.  Both these $(k-1)$-arcs are contained in the subdigraph spanned by $\Gamma_1\cup\cdots\cup\Gamma_k$.  As above, it is impossible that the latter $(k-1)$-arc exists.  Hence $g$ cannot exist and $\Gamma$ is not $(k+1)$-arc-transitive.
\end{proof}

\begin{corollary}
For every $d\ge 2$ and $k\ge 1$ the graph $\ZZ(\vec{K}_{d+1}, k)$ is a two-ended digraph
 with in- and out-valency equal to $d$ which is sharply $k$-arc-transitive.
\end{corollary}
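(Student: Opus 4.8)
The plan is to obtain the corollary by applying earlier results to the digraph $\Delta=\vec{K}_{d+1}$. For the valency claim, read off directly from the definition of $\ZZ(\Delta,k)$ that an out-neighbour of $(i;v_0,\dots,v_{k-1})$ is obtained by keeping all coordinates fixed except the one with index $j\equiv i \mod k$, which must be replaced by one of the $d$ vertices of $\vec{K}_{d+1}$ different from $v_j$; dually for in-neighbours. Hence the in- and out-valency both equal $d$. For sharp $k$-arc-transitivity I would invoke Theorem~\ref{TNot-k+1}(b), so I must check its hypotheses for $\Delta=\vec{K}_{d+1}$: it is connected; it is stable (Example~\ref{Ecomplete}); whenever $(v,w)$ is an arc so is $(w,v)$, since being an arc here just means $v\ne w$; and $\CDHC(\vec{K}_{d+1})$ is arc-transitive. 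Only the last point needs a word: the arcs of $\CDHC(\vec{K}_{d+1})$ are exactly the pairs $((v,0),(w,1))$ with $v\ne w$, the group $\Sym(V)$ acts on $\CDHC(\vec{K}_{d+1})$ via $(v,\epsilon)\mapsto(v^\sigma,\epsilon)$, and since $d+1\ge 3$ any assignment $v_1\mapsto v_2,\ w_1\mapsto w_2$ with $v_1\ne w_1$, $v_2\ne w_2$ extends to an element of $\Sym(V)$, so this action is transitive on arcs. Theorem~\ref{TNot-k+1}(b) then gives that $\ZZ(\vec{K}_{d+1},k)$ is sharply $k$-arc-transitive.

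It remains to show $\Gamma=\ZZ(\vec{K}_{d+1},k)$ is two-ended, and here the cleanest route is via the Cayley-digraph description. Regard $\vec{K}_{d+1}$ as the Cayley digraph of $\ZZ_{d+1}$ with connection set $\ZZ_{d+1}\setminus\{0\}$. The underlying undirected graph of $\vec{K}_{d+1}$ is the complete graph on $d+1\ge 3$ vertices, which is connected and not bipartite, so $\CDHC(\vec{K}_{d+1})$ is connected and hence, as noted in Section~\ref{SConstruction}, $\Gamma$ is connected. By Proposition~\ref{PCayley}, $\Gamma$ is then a connected Cayley digraph of the group $G=\ZZ_{d+1}^k\rtimes\ZZ$, with finite connection set (the out-valency of $\Gamma$ being finite). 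The factor $\ZZ$ of this semidirect product is an infinite cyclic subgroup of $G$ of index $(d+1)^k<\infty$, so $G$ is virtually infinite cyclic and hence has exactly two ends; as $\Gamma$ is a Cayley graph of $G$ with respect to a finite generating set, $\Gamma$ has two ends as well.

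If one prefers to avoid the fact that virtually-$\ZZ$ groups are two-ended, one can argue directly using Property Z. The morphism $\varphi\colon(i;v_0,\dots,v_{k-1})\mapsto i$ has finite fibres $\Gamma_i$, and along a path the $\varphi$-value changes by $\pm 1$ at each step; hence removing the finite set $\bigcup_{|i|\le n}\Gamma_i$ leaves no path joining $\{\varphi>n\}$ to $\{\varphi<-n\}$, so every component of the remainder lies entirely in one of these two infinite sets. Each of $\{\varphi>n\}$ and $\{\varphi<-n\}$ is in fact connected: consecutive fibres are joined by arcs, every vertex of a fibre $\Gamma_i$ with $i>n$ reaches $\Gamma_{n+1}$ by repeatedly passing to an in-neighbour, and any two vertices of a fixed fibre $\Gamma_m$ are joined inside $\Gamma_m\cup\dots\cup\Gamma_{m+2k}$ by going forward $2k$ steps, changing each of the $k$ coordinates first to an auxiliary value (distinct from both its source and its target value, which is possible since $d+1\ge 3$) and then to its target value, and then returning along a backward path of length $2k$ that changes each coordinate twice while leaving it at its target value. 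Thus removing $\bigcup_{|i|\le n}\Gamma_i$ always yields exactly two infinite components, and since these finite sets are cofinal among all finite vertex sets, $\Gamma$ has exactly two ends.

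The essential content is already packaged in Theorem~\ref{TNot-k+1}, so the genuinely new ingredients are only the elementary observation that $\CDHC(\vec{K}_{d+1})$ is arc-transitive and the determination of the number of ends; the latter is the one point where some care is needed, and the quickest argument is the group-theoretic one via $G=\ZZ_{d+1}^k\rtimes\ZZ$ being virtually infinite cyclic.
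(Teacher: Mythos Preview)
Your proof is correct and follows the paper's intended approach. The paper states this corollary without proof, relying on Example~\ref{Ecomplete} (which already records that $\vec{K}_{d+1}$ is stable and that $\ZZ(\vec{K}_{d+1},k)$ has out-valency $d$) together with Theorem~\ref{TNot-k+1}(b) for sharp $k$-arc-transitivity; you have simply spelled out these verifications explicitly, and your argument for two-endedness---either via $\ZZ_{d+1}^k\rtimes\ZZ$ being virtually infinite cyclic, or directly via the finite fibres of the Property~Z morphism---is exactly the kind of justification the paper leaves to the reader.
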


%\begin{proof}
%Let $\vec{K}_{d+1}$ be a complete digraph with $d+1$ vertices ($(v,w)$ is an arc for all vertices $v$ and $w$ with $v\neq w$).  The arc set of $\CDHC(\vec{K}_{d+1})$ consists of all ordered pairs of vertices $((v,0),(w,1))$ with $v\neq w$.  Hence $\vec{K}_{d+1}$ is stable.  By the above the graph $\ZZ(\vec{K}_{d+1}, k)$ is $k$-arc-transitive but not $(k+1)$-arc-transitive and has out-valency $d$.
%\end{proof}

\section{Variations on a theme}

\subsection{Finite examples of sharply $k$-arc-transitive digraphs}\label{SFinite}  

\medskip

\begin{theorem}\label{TNot-k+1-finite}
(a)   Let $\Delta$ be a connected stable directed graph with vertex-set $V$ and $k$ a positive integer.
Assume that whenever $(v,w)$ is an arc in $\Delta$ then $(w,v)$ is also an arc in $\Delta$ and that  $\CDHC(\Delta)$ is arc-transitive.   Set $\Gamma=\ZZ(\Delta,k)$.
Let $s$ be the automorphism of $\Gamma$ described at the beginning of Section~\ref{SAutomorphism} and set $N=\langle s^q\rangle$ where $q=lk$ for some integer $l>1$.  Then $\Gamma/N$ is a sharply $k$-arc-transitive finite digraph.
 
(b)  Adopt the notation used in Example~\ref{Ecycles} and in Section~\ref{SAutomorphism}.   Define $\Gamma=\ZZ(\Theta_n, k)$, where $3\leq n<\infty$ and $k\geq 1$.  Set  $N=\langle s^q\rangle$ where $q$ is some number that is divisible by $kn$.
Then the digraph $\Gamma/N$ where $N=\langle s^q\rangle$ is a sharply $k$-arc-transitive finite digraph.  
\end{theorem}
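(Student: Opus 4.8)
The plan is to treat (a) and (b) in parallel, since the mechanics are identical; only the group witnessing $k$-arc-transitivity and one arithmetic inequality differ. Throughout write $\Gamma=\ZZ(\Delta,k)$, $V=\V\Delta$, and $\Gamma_i=\{i\}\times V^k$.

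\textbf{Step 1: the quotient is a finite digraph and $\pi$ is a covering.} In both parts $k\mid q$ (indeed $q=lk$ in (a), and $kn\mid q$ forces $k\mid q$ in (b)), so $s^q$ acts as $(i;x_0,\dots,x_{k-1})\mapsto(i+q;x_0,\dots,x_{k-1})$; hence $N=\langle s^q\rangle\cong\ZZ$ acts freely on $\Gamma$ and maps each layer $\Gamma_i$ bijectively onto the distinct layer $\Gamma_{i+q}$. Consequently $\Gamma/N$ is a loopless digraph with vertex set $\ZZ_q\times V^k$, which is \emph{finite} because $V$ is finite ($V=\ZZ_n$ in (b)), and the quotient map $\pi\colon\Gamma\to\Gamma/N$ is a local isomorphism of digraphs. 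In particular every $k$-arc of $\Gamma/N$ lifts (after a lift of its initial vertex is chosen) to a $k$-arc of $\Gamma$, and every $k$-arc of $\Gamma$ projects to one of $\Gamma/N$, so the $k$-arcs of $\Gamma/N$ are exactly the $N$-orbits of $k$-arcs of $\Gamma$.

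\textbf{Step 2: $\Gamma/N$ is $k$-arc-transitive.} I would show this by producing a subgroup $K\le\Aut(\Gamma)$ that acts $k$-arc-transitively on $\Gamma$ and normalises $N$: then $K$ induces automorphisms of $\Gamma/N$, and given two $k$-arcs of $\Gamma/N$ one lifts them to $\Gamma$ by Step 1, maps one onto the other by an element of $K$, and projects. In (b) take $K=\tHpsi=H^k\rtimes\langle s\rangle$, with $(H,\psi)$ the dihedral group and its order-$n$ automorphism from Example~\ref{Ecycles}; $K$ is $k$-arc-transitive on $\Gamma$ by Lemma~\ref{Lkarc}, and since (by the remark at the end of Section~\ref{SAutomorphism}) conjugation by $s$ induces an automorphism of $\tHpsi$ of order $kn$, the element $s^{kn}$ — hence $s^q$, because $kn\mid q$ — is central in $\tHpsi$, so $N\trianglelefteq\tHpsi$. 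In (a) I would take an arc-transitive subgroup $H\le\autde$ (see the obstacle below), so that $\psi$ may be taken to be $\id$; then $K=\tH=H^k\rtimes\langle s\rangle$ is $k$-arc-transitive on $\Gamma$ by Example~\ref{Eid-transitive} and Lemma~\ref{Lkarc}, and since conjugation by $s$ merely cyclically permutes the $k$ factors of $H^k$, already $s^k$ — hence $s^q=s^{lk}$ — is central in $\tH$, so $N\trianglelefteq\tH$.

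\textbf{Step 3: $\Gamma/N$ is not $(k+1)$-arc-transitive.} Here I would transplant the proof of Theorem~\ref{TNot-k+1} into $\Gamma/N$. Take the $k$-arc $\gamma$ used there, starting in layer $0$ and ending in layer $k$; its terminal vertex has at least two out-neighbours in layer $k+1$, and the cited argument shows that any automorphism fixing $\gamma$ fixes at least one of them — whence the stabiliser of $\gamma$ in $\Aut(\Gamma/N)$ is not transitive on those out-neighbours, so $\Gamma/N$, being $k$-arc-transitive, is not $(k+1)$-arc-transitive. The configuration that argument uses (an alternet in layers $0,1$, an alternet in layers $k,k+1$, and a $(k-1)$-arc confined to layers $1,\dots,k$, along which the $0$-th coordinate is constant) lies in the $k+2$ consecutive layers $0,1,\dots,k+1$, and these stay pairwise distinct in $\Gamma/N$ as soon as $q\ge k+2$. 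This holds in (b) since $q\ge kn\ge 3k\ge k+2$, and in (a) for $k\ge2$ since $q\ge 2k\ge k+2$; the only residual case, $k=1$ and $q=2$ in (a), is immediate, because there the relevant out-neighbour of the terminal vertex of $\gamma$ equals the initial vertex of $\gamma$ and is therefore fixed.

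\textbf{Main obstacle.} The point I expect to require the most care is the claim in Step 2(a) that the hypotheses (stability, symmetry, arc-transitivity of $\CDHC(\Delta)$) really do furnish an arc-transitive subgroup of $\autde$ — equivalently, that $\Aut(\Delta)$ itself acts arc-transitively on $\Delta$ (so that $\psi$ may be taken to be $\id$); this is transparent in the motivating case $\Delta=\vec{K}_{d+1}$, where $\Aut(\vec{K}_{d+1})$ is the full symmetric group, but in general it must be extracted from stability, which constrains the two coordinates of a two-fold automorphism of $\Delta$ to have equal fixed-point sets. Once this is in hand, taking $\psi=\id$ places $s^k$, and hence $N$, in the centre of $\tH$, so the construction works for every $l$; the hypothesis $l>1$ in (a) — exactly like the divisibility $kn\mid q$ in (b) — then serves only to guarantee $q\ge k+1$ in Step 3 (for $l=1$ the quotient becomes ``periodic of period $k$'' and both the layer-counting and the conclusion break down).
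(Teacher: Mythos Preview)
Your approach is essentially the paper's: both arguments use $\tH$ (respectively $\tHpsi$) as a $k$-arc-transitive subgroup in which $s^q$ is central, hence normalising $N$, and then transport the layer-based argument of Theorem~\ref{TNot-k+1} to the quotient by observing that the relevant block of consecutive layers embeds isomorphically.

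The ``main obstacle'' you flag is real and is simply glossed over in the paper, which invokes $\tH$ without comment. Your suggested resolution is correct: since every arc of $\CDHC(\Delta)$ goes from the $0$-side to the $1$-side, any automorphism of $\CDHC(\Delta)$ preserves the bipartition and is thus induced by some $(g,h)\in\TFaut(\Delta)$; stability then forces $g=h$, so $g\in\autde$, and arc-transitivity of $\CDHC(\Delta)$ descends to arc-transitivity of $\autde$ on $\Delta$. Taking $H=\autde$ with $\psi=\id$ then gives the required $\tH$. Your treatment of the inequality (you need $q\ge k+2$, and you handle the residual case $k=1$, $q=2$ separately) is in fact slightly more careful than the paper's bare ``$q>k$''.
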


\begin{proof}  (a)  Since $s^k$ is central in $\tH$, the group $N$ is normal in $\tH$.  The group $\tH$ acts $k$-arc-transitively on $\Gamma$ and then $\tH$  also acts $k$-arc-transitively on the quotient digraph $\Gamma/N$.
Define $\Gamma_{[0,k+1]}$ as the subdigraph in $\ZZ(\Delta, k)$ that is spanned by $\Gamma_0\cup\cdots\cup\Gamma_k\cup\Gamma_{k+1}$ (see Section~\ref{SConstruction}).  As $q>k$, the image of $\Gamma_{[0,k+1]}$ in the quotient digraph $\Gamma/N$ is isomorphic to   
$\Gamma_{[0,k+1]}$ and the argument used in Theorem~\ref{TNot-k+1} proves that $\Gamma$ is not $(k+1)$-arc-transitive applies.

(b)  Since the order of $\psi$ is $n$ we see that $s^q$ is central in $\tHpsi$.  Hence $N$ is a normal subgroup of $\tHpsi$.  The result then follows in the same way as in part (a).
\end{proof}

\noindent
{\bf Remark.}   In the case where $\Delta$ is a connected arc-transitive graph the graph $\Gamma/N$ described in part (a) of the above Theorem is the same as the graph $C_q(|\V\Delta|,k, \Delta)$ constructed by Praeger in \cite[Definition 2.10]{Praeger1989}.  She proves that these graphs are $k$-arc-transitive, \cite[Proposition 2.1]{Praeger1989}, but does not explore the question whether or not these graphs are $(k+1)$-arc-transitive.  

In her paper Praeger does construct a family of finite digraphs that are sharply $k$-arc-transitive.  She defines a digraph $C_r(v,k)$ that has vertex set $\ZZ_r\times \ZZ_v^k$ and arcs of the type
$$((i; x_1, \ldots,x_k), (i+1;y, x_1, \ldots, x_{k-1}))$$
for $i\in \ZZ_r$ and $y, x_1, \ldots, x_k\in \ZZ_v$, and shows that for $r>k\geq 1$ and $v\geq 2$ the graph $C_r(v, r-k)$ is sharply $k$-arc-transitive \cite[Theorem~2.8(c)]{Praeger1989}.    One can generalize her construction 
and define a digraph $P(v,k)$ with vertex set $\ZZ\times\ZZ_v^k$ and set of arcs defined in the same way as above.  The graph $P(v,k)$ is precisely what you get if you apply the construction described in the remark in Section~\ref{SConstruction} with $\Delta$ a complete graph on $v$ vertices with a loop attached at each vertex.  As observed in \cite[Remark 3.4]{CPW1993}, the digraph $P(v,k)$ is highly-arc-transitive.  Thus, the reason why her digraphs $C_r(v, r-k)$ are not $(k+1)$-arc-transitive is entirely different from the reason why the digraphs constructed in Theorem~\ref{TNot-k+1-finite} are not $(k+1)$-arc-transitive.  

\subsection{Polynomial growth}

Let $\Gamma$ be a connected digraph.  For a vertex $v$ let $b_n(v)$ denote the number of vertices that can be reached from $v$ with a path of length $\leq n$.  The digraph $\Gamma$ is said to have {\em polynomial growth} if there is a polynomial $P$ such that $b_n(v)\leq P(n)$ for all $n\geq 0$.  (One can show that this property does not depend on the choice of the vertex $v$.)   The lowest possible degree of the polynomial $P$ bounding $b_n(x)$ is the {\em degree} of the growth.

 Seifter in \cite{Seifter1991} shows that if $\Gamma$ is an undirected graph with polynomial growth and $\autga$ acts transitively on the set of paths of length $s$ then $s\leq 7$.  As pointed out by Seifter in \cite[p.\ 1532]{Seifter2008} it follows from results in \cite{Moller2002} that a digraph with polynomial growth of degree higher than linear, in which case the digraph has only one end, cannot be highly-arc-transitive.  It is  a natural question to ask if a one-ended digraph with polynomial growth (i.e.~not linear growth) can be $k$-arc-transitive for arbitrarily high values of $k$.

 \begin{corollary}\label{CPolynomial}
The digraph $\ZZ(\Theta_\infty, k)$ has polynomial growth of degree $k+1$ and only one end and is sharply $k$-arc-transitive.
\end{corollary}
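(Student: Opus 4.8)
The plan is to get sharp $k$-arc-transitivity for free from Theorem~\ref{TNot-k+1}(a) applied in the case $n=\infty$, and then to treat the two remaining assertions --- polynomial growth of degree exactly $k+1$, and one-endedness --- by first computing ball sizes in $\Gamma=\ZZ(\Theta_\infty,k)$ directly and then invoking the standard dichotomy for the number of ends of a vertex-transitive graph. As a preliminary, note that $\CDHC(\Theta_\infty)$ is connected: from a vertex $(x,0)$ one reaches $(x,1)$ and $(x+1,1)$, hence also $(x+1,0)$ and $(x-1,0)$. By the discussion at the end of Section~\ref{SConstruction} this makes $\Gamma$ connected; and since $\Gamma$ is $k$-arc-transitive it is in particular vertex-transitive, so its underlying undirected graph is connected, locally finite (in- and out-valency $2$) and vertex-transitive.

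For the growth, fix the base vertex $x_0=(0;0,\ldots,0)$. Every step of a path in $\Gamma$ changes the first coordinate by exactly $\pm1$ and changes each of the remaining $k$ coordinates by $0$ or $\pm1$ (only the coordinate congruent to the current level modulo $k$ can move, and it moves by at most one). Hence any vertex within distance $n$ of $x_0$ lies in $\{-n,\ldots,n\}^{k+1}$, giving $b_n(x_0)\le(2n+1)^{k+1}$. For a matching lower bound, start at $x_0$ and follow $m\le n$ consecutive out-arcs: one passes through the levels $0,1,\ldots,m$, the out-arc leaving level $i$ acts on coordinate $i\bmod k$, and --- crucially --- since $\Theta_\infty$ has a loop at every vertex, at each such step one may independently either increment that coordinate or leave it fixed. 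So every vertex $(m;v_0,\ldots,v_{k-1})$ with $0\le v_j\le\lfloor m/k\rfloor$ is reached, and these are pairwise distinct as $m$ and the $v_j$'s vary. Summing over $\lceil n/2\rceil\le m\le n$ gives $b_n(x_0)\ge \tfrac{n}{2}\bigl(\lfloor n/(2k)\rfloor+1\bigr)^k=\Omega(n^{k+1})$. Thus $b_n(x_0)$ is bounded by a polynomial of degree $k+1$ but by none of degree $k$, i.e.\ the growth degree is exactly $k+1$. (An alternative route: $\Theta_\infty$ is the Cayley digraph of $\ZZ$ with connection set $\{0,1\}$, so by Proposition~\ref{PCayley} the connected digraph $\Gamma$ is a Cayley digraph of $\ZZ^k\rtimes\ZZ$, a group containing $\ZZ^{k+1}$ with index $k$; both the growth degree and the one-endedness then follow from standard facts about $\ZZ^{k+1}$.)

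Finally, for the number of ends: $\Gamma$ is an infinite connected locally finite vertex-transitive graph, so by \cite[Corollary~4]{DiestelJungMoller1993} it has $1$, $2$, or $2^{\aleph_0}$ ends. A vertex-transitive graph with infinitely many ends has exponential growth, and one with exactly two ends is quasi-isometric to $\ZZ$ and hence has linear growth; since $k+1\ge2$, the growth computation rules both out. Therefore $\Gamma$ has exactly one end, which completes the proof.

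The only point where the specific input digraph $\Theta_\infty$ is genuinely used --- and hence the one place that needs care --- is the growth lower bound: it is exactly the loops of $\Theta_\infty$ that supply the ``do nothing'' choice at each active step, which is what lets the $k$ non-level coordinates be set independently across a range of size $\sim n/k$ and thereby pushes the growth degree up to $k+1$ rather than $k$. The remaining ingredients (the upper bound, distinctness of the vertices produced, and the ends argument) are routine or are quotations of standard results.
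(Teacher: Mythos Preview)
Your proof is correct. The sharp $k$-arc-transitivity and the one-end argument match the paper's reasoning (the paper simply asserts ``therefore only one end'' after establishing superlinear polynomial growth, which is the same dichotomy you spell out explicitly).

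Where you genuinely diverge is in the growth computation. The paper's proof is purely group-theoretic: it observes that $\Theta_\infty$ is a Cayley digraph of $\ZZ$, invokes Proposition~\ref{PCayley} to identify $\Gamma$ as a Cayley digraph of $G=\ZZ^k\rtimes\ZZ$, notes that $s^k$ is central so that $\ZZ^{k+1}$ sits inside $G$ with finite index, and then quotes Wolf's theorems to read off growth degree $k+1$. You instead count balls directly, squeezing $b_n(x_0)$ between $(2n+1)^{k+1}$ and $\Omega(n^{k+1})$ by tracking how each step moves the level and the active coordinate. Your route is more elementary and self-contained --- no appeal to Wolf --- and makes visible exactly where the loops in $\Theta_\infty$ enter (they provide the ``do nothing'' option that lets the $k$ coordinates be set independently). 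The paper's route is shorter and places the example in its natural algebraic context. You already flag the Cayley-graph argument as an alternative, so you have both in hand.
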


\begin{proof}
The digraph $\Theta_\infty$ is a Cayley digraph for $\ZZ$.  By Proposition~\ref{PCayley} the digraph $\ZZ(\Theta_\infty, k)$ is a Cayley digraph for the group $G=\ZZ^k\rtimes \ZZ$ where $\ZZ$ acts on $\ZZ^k$ by cyclically permuting the factors.  If $s$ is a generator for the factor $\ZZ$ in this semi-direct product then $s^k$ is central in $G$ and the subgroup $\ZZ^k\times \langle s^k\rangle\cong \ZZ^{k+1}$ has finite index in $G$.  Using a result of Wolf \cite[Theorem 3.11]{Wolf1968} we conclude that $\Gamma$ has the same growth degree as a locally finite Cayley digraph for $\ZZ^{k+1}$ and by \cite[Proposition~3.6]{Wolf1968} the growth degree of such Cayley digraphs is $k+1$.
The digraph $\ZZ(\Theta_\infty, k)$ has therefore only one end and is locally finite.    It follows from Lemma~\ref{Lkarc} that this digraph is $k$-arc-transitive and from Theorem~\ref{TNot-k+1} that it is not $(k+1)$-arc-transitive.
\end{proof}

Corollary~\ref{CPolynomial} gives examples of sharply $k$-arc-transitive digraphs whose underlying (undirected) graphs have polynomial growth of growth rate $k+1$. So it is clear that the growth rate of our $k$-arc-transitive digraphs grows with $k$. Hence it is natural to ask if one can find digraphs whose underlying graphs have a fixed growth rate $d>1$ but are $k$-arc-transitive for arbitrarily large $k$? Since it seems that an answer to this question might be really difficult to find, we pose it as a problem:
\smallskip

\noindent {\bf Problem.} Does there exist a function $f(d)$ such that if $\Gamma$ is a $k$-arc-transitive digraph  and the underlying graph has growth rate $d$, then  $k<f(d)$?

\subsection{Still more examples with the direct fibre product}

The direct fibre product defined by Neumann in \cite{Neumann2013} can be used in conjunction with the construction of $\ZZ(\Delta, k)$ above.

Let $\Gamma_1$ and $\Gamma_2$ be connected digraphs  having Property Z, witnessed by digraph morphisms $\varphi_1: \Gamma_1\to \vZZ$ and
$\varphi_2: \Gamma_2\to \vZZ$.  The {\em direct fibre product}  $\Gamma=\Gamma_1\,_{\varphi_1}\!\!\times_{\varphi_2}\,\Gamma_2$ is a digraph with vertex and edge sets
\begin{align*}
\V\Gamma&=\{(v_1, v_2)\mid v_1 \in \V\Gamma_1, v_2 \in \V\Gamma_2 \mbox{ and } \varphi_1(v_1)=\varphi_2(v_2)\}\\
\E\Gamma&=\{((v_1, v_2), (w_1, w_2))\mid (v_1, w_1) \in \E\Gamma_1 \mbox{ and } (v_2, w_2) \in \E\Gamma_2\}.
\end{align*}
Let $\pi_1$ and $\pi_2$ denote the projections from $\V\Gamma$ onto the sets $\V\Gamma_1$ and $\V\Gamma_2$, respectively.  Both these projections are digraph homomorphisms.

\begin{lemma}\label{LFibreProduct}  Let $\Gamma_1, \Gamma_2$ and $\Gamma$ be as above.

If both $\Gamma_1$ and $\Gamma_2$ are vertex-transitive then $\Gamma$ is also vertex-transitive.
When both $\Gamma_1$ and $\Gamma_2$ are $k$-arc-transitive then $\Gamma$ is also $k$-arc-transitive.
In particular, if $\Gamma_1$ and $\Gamma_2$ are highly-arc-transitive then $\Gamma$ is highly-arc-transitive.
\end{lemma}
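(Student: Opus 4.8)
The plan is to verify each of the three assertions by constructing automorphisms of $\Gamma$ directly from automorphisms of $\Gamma_1$ and $\Gamma_2$, exploiting the fact that the fibre product is built over the same base $\vZZ$. The key observation is that any automorphism $\alpha$ of $\Gamma_i$ must respect the end structure: since $\Gamma_i$ is connected and has Property Z via $\varphi_i$, one shows that $\alpha$ descends to an automorphism of $\vZZ$, and the only automorphisms of $\vZZ$ are the translations $i\mapsto i+c$. Hence for each $\alpha\in\aut(\Gamma_i)$ there is a well-defined integer $c(\alpha)$ with $\varphi_i(v^\alpha)=\varphi_i(v)+c(\alpha)$ for all $v$. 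This ``shift'' is the compatibility parameter that lets a pair of automorphisms act on the fibre product.

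First I would prove vertex-transitivity. Given two vertices $(v_1,v_2)$ and $(w_1,w_2)$ of $\Gamma$, note $\varphi_1(v_1)=\varphi_2(v_2)$ and $\varphi_1(w_1)=\varphi_2(w_2)$. Pick $\alpha_1\in\aut(\Gamma_1)$ with $v_1^{\alpha_1}=w_1$ and $\alpha_2\in\aut(\Gamma_2)$ with $v_2^{\alpha_2}=w_2$; these exist by vertex-transitivity of $\Gamma_1,\Gamma_2$. Then $c(\alpha_1)=\varphi_1(w_1)-\varphi_1(v_1)=\varphi_2(w_2)-\varphi_2(v_2)=c(\alpha_2)$, so the map $(a,b)\mapsto(a^{\alpha_1},b^{\alpha_2})$ sends the fibre over $n$ to the fibre over $n+c(\alpha_1)$, hence preserves the condition $\varphi_1=\varphi_2$ defining $\V\Gamma$; it is visibly a bijection of $\V\Gamma$ preserving $\E\Gamma$ (since each coordinate-automorphism preserves arcs), so it is an automorphism of $\Gamma$ carrying $(v_1,v_2)$ to $(w_1,w_2)$.

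For $k$-arc-transitivity the argument is the same, one level up. A $k$-arc in $\Gamma$ projects under $\pi_1$ and $\pi_2$ to $k$-arcs in $\Gamma_1$ and $\Gamma_2$ respectively, because if $((a,b),(a',b'))\in\E\Gamma$ then $(a,a')\in\E\Gamma_1$ and $(b,b')\in\E\Gamma_2$ by definition. Given two $k$-arcs in $\Gamma$, choose $\alpha_i\in\aut(\Gamma_i)$ mapping the $i$-th projection of the first to the $i$-th projection of the second, using $k$-arc-transitivity of $\Gamma_i$. Since the initial vertices of corresponding $k$-arcs lie in fibres, the shifts $c(\alpha_1)$ and $c(\alpha_2)$ again agree; as before $(a,b)\mapsto (a^{\alpha_1},b^{\alpha_2})$ is an automorphism of $\Gamma$, and by construction it maps the first $k$-arc onto the second coordinatewise, hence as a $k$-arc of $\Gamma$. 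Here one should check that a pair of vertices agreeing coordinatewise with the two $k$-arcs really is a vertex of $\Gamma$, i.e.\ that the two projections land in the same fibre at every step; this follows because the $k$-arcs in $\Gamma_1$ and $\Gamma_2$ obtained from a single $k$-arc of $\Gamma$ sit over the same arithmetic progression in $\vZZ$, and applying shift-equal automorphisms preserves that. The ``in particular'' clause is immediate: if $\Gamma_1,\Gamma_2$ are $k$-arc-transitive for every $k$, then so is $\Gamma$ by what was just shown.

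The main obstacle is the foundational claim that every automorphism of a connected digraph with Property Z induces a translation on the base $\vZZ$; once that is in hand, everything else is bookkeeping. The point is that an arc of $\Gamma_i$ always increases the $\varphi_i$-value by exactly $1$, so $\varphi_i$ is, up to the constant $\varphi_i$ of a base vertex, the signed distance to a fixed fibre along directed paths; an automorphism permutes the fibres $\varphi_i^{-1}(n)$ and, by connectedness and the fact that arcs connect consecutive fibres, must do so by a single global shift. I would state and prove this as a short preliminary observation (or cite it if it appears earlier), and then the three parts of the lemma follow as above.
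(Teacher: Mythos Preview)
Your argument is correct and follows essentially the same approach as the paper's proof: both hinge on the observation that every automorphism of a connected digraph with Property~Z shifts all fibres by a single constant, and that a pair $(\alpha_1,\alpha_2)$ with matching shifts induces an automorphism of the fibre product. The paper organizes this slightly differently---it first isolates the fibre-preserving subgroups $N_i$ (shift zero) and shows $N_1\times N_2$ acts transitively on each fibre, then invokes translations to move between fibres, and for $k$-arc-transitivity reduces first to $k$-arcs with the same initial vertex---whereas you handle arbitrary pairs of vertices or $k$-arcs in one step by directly matching shifts; your version is marginally more streamlined but the content is the same.
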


\begin{proof}
Set $G_1=\Aut(\Gamma_1)$ and $G_2=\Aut(\Gamma_2)$.   Let $N_i$ denote the kernel of the action of $G_i$ on the fibers of $\varphi_i$ for $i=1, 2$.  Furthermore, define $\varphi$ as the digraph homomorphism $\varphi:  \Gamma\to \vZZ$ such that $\varphi(v, w)=\varphi_1(v)$ and recall that $\varphi_1(v)=\varphi_2(w)$.

As Neumann observes in \cite{Neumann2013} then $N_1\times N_2$ acts naturally on $\Gamma$ such that if $(g_1, g_2)\in N_1\times N_2$ then $(v,w)^{(g_1, g_2)}=(v^{g_1}, w^{g_2})$. Clearly $N_1\times N_2$ acts transitively on each fiber of $\varphi$.   Say that $g_i\in G_i$ is a {\em translation of magnitude} $l_i$ if $\varphi_i(v^{g_i})=\varphi_i(v)+l_i$ for some vertex $v$ in $\Gamma_i$ (and hence every vertex).   If $g_1\in G_1$ and $g_2\in G_2$ have the same magnitude then the map $(v,w)\mapsto (v^{g_1}, w^{g_2})$ is an automorphism of $\Gamma$.  These two observations imply that $\Gamma$ is vertex-transitive.

Let $\gamma_1$ and $\gamma_2$ be two $k$-arcs in $\Gamma$.  Because we have already shown that $\Gamma$ is vertex-transitive we may assume that $\gamma_1$ and $\gamma_2$ have the same initial vertex.  Set $\gamma_i^j=\pi_j(\gamma_i)$ for $i,j=1,2$.  Then $\gamma_i^j$ is a $k$-arc in $\Gamma_j$ and $\gamma_1^j$ and $\gamma_2^j$ have the same initial vertex.  By assumption there exists an element $g_j\in N_j$ such that $(\gamma_1^j)^{g_j}=\gamma_2^j$.  Now, $(g_1, g_2)\in N_1\times N_2$ and, acting on $\Gamma$, this element takes the $k$-arc $\gamma_1$ to the $k$-arc $\gamma_2$.   The conclusions now follow.
\end{proof}

\begin{theorem}
Suppose $\Gamma_1$ is some connected $k$-arc-transitive digraph that has property Z. 
Let $\Delta$ be a finite connected stable digraph such that $\CDHC(\Delta)$ is arc-transitive and $\ZZ(\Delta, k)$ is connected, or let $\Delta=\Theta_n$ for $3\leq n<\infty$.   Set $\Gamma_2=\ZZ(\Delta, k)$ for some positive integer $k$.  Then 
$\Gamma=\Gamma_1\,_{\varphi_1}\!\!\times_{\varphi_1}\,\Gamma_2$ is 
sharply $k$-arc-transitive.
\end{theorem}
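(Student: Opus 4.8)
The plan is to combine Lemma~\ref{LFibreProduct} with Theorem~\ref{TNot-k+1} (and the connectivity assumptions), so that the two halves of the statement---$k$-arc-transitivity and failure of $(k+1)$-arc-transitivity---are handled separately. For the positive half, $\Gamma_1$ is $k$-arc-transitive by hypothesis and $\Gamma_2=\ZZ(\Delta,k)$ is $k$-arc-transitive by Theorem~\ref{TNot-k+1} (part (b) when $\Delta$ is a finite connected stable digraph with $\CDHC(\Delta)$ arc-transitive, part (a) when $\Delta=\Theta_n$); both are connected and have Property Z. Hence Lemma~\ref{LFibreProduct} applies directly and gives that $\Gamma=\Gamma_1\,_{\varphi_1}\!\!\times_{\varphi_1}\,\Gamma_2$ is $k$-arc-transitive. (Strictly one should note $\Gamma$ is connected, or at least that the relevant component is, so that the fibre-product lemma is being applied in the intended setting; since both factors have Property Z witnessed by the \emph{same} morphism to $\vZZ$, the projection $\pi_2$ is a surjective digraph morphism $\Gamma\to\Gamma_2$, which is what we will exploit below.)

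For the negative half, the idea is to push the obstruction in $\Gamma_2$ through the projection $\pi_2:\Gamma\to\Gamma_2$. Recall from Theorem~\ref{TNot-k+1} that in $\Gamma_2$ there is a specific $k$-arc $\gamma$ (namely $(0;v,v,\ldots,v),(1;w,v,\ldots,v),\ldots,(k;w,w,\ldots,w)$ in case (b), or the all-zero $k$-arc in case (a)) whose terminal vertex has an out-neighbour $u_0$ (e.g.\ $(k+1;v,w,\ldots,w)$) such that every automorphism of $\Gamma_2$ fixing $\gamma$ also fixes $u_0$; the proof of Theorem~\ref{TNot-k+1} shows this by a ``last-coordinate'' argument inside the subdigraph spanned by $\Gamma_1\cup\cdots\cup\Gamma_k$. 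The plan is to lift $\gamma$ to a $k$-arc $\widetilde\gamma$ in $\Gamma$ with $\pi_2(\widetilde\gamma)=\gamma$, pick a $k$-arc $\widetilde\gamma'$ with the same initial vertex as $\widetilde\gamma$ that projects (via $\pi_2$) to $\gamma$ but whose extension in $\Gamma_2$ to a $(k+1)$-arc lies over a \emph{different} out-neighbour of the terminal vertex of $\gamma$, and then argue that no automorphism of $\Gamma$ can carry $\widetilde\gamma$ to $\widetilde\gamma'$. Concretely, since $\pi_2$ is a digraph morphism intertwining the $\Aut(\Gamma)$-action with an action on $\Gamma_2$, any automorphism of $\Gamma$ fixing $\widetilde\gamma$ would induce a map on the $\Gamma_2$-coordinates fixing $\gamma$; one then needs that the induced behaviour on the relevant out-neighbours of the terminal vertex is governed by the same rigidity as in Theorem~\ref{TNot-k+1}.

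The main obstacle is exactly that last point: $\pi_2$ need not push forward an automorphism of $\Gamma$ to an automorphism of $\Gamma_2$ (the second-coordinate behaviour may depend on the first coordinate), so one cannot simply quote Theorem~\ref{TNot-k+1} as a black box. The cleanest way around this is to re-run the argument of Theorem~\ref{TNot-k+1} internally in $\Gamma$: decompose $\Gamma_2$'s vertex set as $\ZZ\times V^k$ and note that the fibres of $\varphi$ in $\Gamma$ sit over those of $\varphi_2$, so that a vertex of $\Gamma$ carries a well-defined ``$\Gamma_2$-coordinate'' in $V^k$; then the key structural fact---that along any arc from $\Gamma_i$ to $\Gamma_{i+1}$ only the coordinate with index $\equiv i \bmod k$ can change, and it changes along an arc of $\Delta$---transfers verbatim to $\Gamma$ via $\pi_2$. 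Thus an automorphism $g$ of $\Gamma$ fixing $\widetilde\gamma$ fixes all the $\Gamma_2$-coordinates of the vertices of $\widetilde\gamma$, fixes (by the stability/block argument, which is about $\CDHC(\Delta)$ and hence about alternets of $\Gamma_2$, which are also realized as alternets in $\Gamma$) the $\Gamma_2$-coordinate of the relevant vertex in $\Gamma_1$, and then the ``the offending coordinate cannot have changed'' contradiction of Theorem~\ref{TNot-k+1} forces $g$ to fix a chosen out-neighbour of the terminal vertex of $\widetilde\gamma$ that maps under $\pi_2$ to $u_0$. Since there is another out-neighbour available (the out-valencies in question are $\ge 2$, coming from the loop-plus-arc structure of $\Theta_n$ or from $|V|\ge 2$ in the stable case), $\Gamma$ is not $(k+1)$-arc-transitive. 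Combining the two halves gives that $\Gamma$ is sharply $k$-arc-transitive. \qed
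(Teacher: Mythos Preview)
Your treatment of the $k$-arc-transitive half is fine and matches the paper exactly (Lemma~\ref{Lkarc} for $\Gamma_2$, then Lemma~\ref{LFibreProduct}).

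For the failure of $(k+1)$-arc-transitivity, you and the paper pursue the same underlying idea---re-run the obstruction of Theorem~\ref{TNot-k+1} inside $\Gamma$---but via dual framings. You work with the projection $\pi_2:\Gamma\to\Gamma_2$ and track ``$\Gamma_2$-coordinates''. The paper instead chooses a two-way infinite directed line $L=\ldots,v_{-1},v_0,v_1,\ldots$ in $\Gamma_1$ and observes that the induced subdigraph $\pi_1^{-1}(L)$ is isomorphic to $\Gamma_2$; it then asserts that the argument of Theorem~\ref{TNot-k+1} shows that an automorphism of $\Gamma$ fixing a $k$-arc contained in $\pi_1^{-1}(L)$ must fix those out-neighbours of its terminal vertex that also lie in $\pi_1^{-1}(L)$ (of which there are at least two). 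The embedded-subdigraph viewpoint is cleaner because it gives you a concrete copy of $\Gamma_2$ in which the relevant $k$-arc, the auxiliary $(k-1)$-arc, and the two competing out-neighbours all live simultaneously; the ``wrong-first-coordinate'' contradiction then transfers verbatim, since any $(k-1)$-arc in $\Gamma$ projects under $\pi_2$ to a $(k-1)$-arc in $\Gamma_2$.

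One point in your write-up should be corrected: the assertion that ``alternets of $\Gamma_2$ \ldots\ are also realized as alternets in $\Gamma$'' is not true in general---an alternet of $\Gamma$ fibres over alternets in both factors, so it is typically strictly larger than a copy of $\CDHC(\Delta)$. This does not sink the argument, but it means the stability/block step cannot be quoted as a statement about alternets of $\Gamma$; it has to be extracted as a structural statement about $\CDHC(\Delta)$ sitting inside the bipartite subdigraph between two consecutive fibres, exactly as the paper's $\pi_1^{-1}(L)$ framing makes transparent.
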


\begin{proof}
That $\Gamma$ is $k$-arc-transitive follows from Lemmas~\ref{Lkarc} and \ref{LFibreProduct}.

Let $L=\ldots, v_{-1}, v_0, v_1, v_2, \ldots$ be an infinite directed line in $\Gamma_1$.  The subdigraph spanned by $\pi_1^{-1}(L)$ is isomorphic to $\Gamma_2=\ZZ(\Delta, k)$.  The argument in Theorem~\ref{TNot-k+1} shows that if an automorphism fixes a $k$-arc lying in this subdigraph then this automorphism must also fix those out-neighbours of the terminal vertex of the $k$-arc that lie in this subdigraph.  Thus $\Gamma$ is not $(k+1)$-arc-transitive.
\end{proof}

\begin{example}
Suppose $\Gamma_2$ is equal to $\ZZ(\Theta_n, k)$, for some $3\leq n<\infty$ or
equal to $\ZZ(\vec{K}_d, k)$, for some $2\leq d<\infty$.

Let $\Gamma_1$ be a directed tree, not isomorphic to $\vZZ$, that is highly-arc-transitive and has Property Z.  Then the direct fibre product of $\Gamma_1$ and $\Gamma_2$ has infinitely many ends and is sharply $k$-arc-transitive.

The Diestel-Leader graphs $\DL(p,q)$, for $p,q\geq 2$ were originally defined in \cite{DiestelLeader2001}.  The Diestel-Leader graphs can be described in various ways; one description is in \cite[Example~1]{Moller2002} and an another one in \cite{BartholdiNeuhauserWoess2008}.    In \cite{BartholdiNeuhauserWoess2008} they are described in terms of the {\em horocyclic product} of trees.  The direct fibre product in \cite{Neumann2013} is an analogue of the horocyclic product and one can describe the Diestel-Leader graphs in terms of the direct fibre product, as explained in \cite[Section 4.5]{Neumann2013}.  Let $T_1$ be the regular directed tree with in-valency 1 and out-valency $p$ and $T_2$ be the regular directed tree with  in-valency $q$ and out-valency 1.  Both of these digraphs have property Z and their direct fibre product is a directed graph $\Gamma_1$ whose underlying undirected graph is the Diestel-Leader graph $\DL(p,q)$.  By Lemma~\ref{LFibreProduct} this digraph $\Gamma_1$ is highly-arc-transitive.  It is shown in \cite[Proposition~5]{DiestelLeader2001} that the graphs $\DL(p,q)$ have just one end but this can also be seen from the construction in \cite[Example~1]{Moller2002}.
The direct fibre product of $\Gamma_1$ and $\Gamma_2$ is a digraph that is sharply $k$-arc-transitive and has one end and exponential growth.
\end{example}

\section*{Acknowledgements}

The first named author wants to thank Alex Wendland for helpful discussions concerning Praeger's construction in \cite{Praeger1989}.

%\section*{References}

\bibliographystyle{abbrv}
\bibliography{references}

\end{document}